\documentclass[12pt,a4paper]{amsart}
\usepackage{amsmath}
\usepackage{amsthm}
\usepackage{amssymb}
\usepackage{mathrsfs}
\usepackage[british]{babel}
\usepackage{ot1patch}
\usepackage{graphicx}
\usepackage{verbatim}
\usepackage{subcaption}
\usepackage[leftcaption]{sidecap}
\input{insbox}

\newtheorem{thm}{Theorem}[section]
\newtheorem{lem}[thm]{Lemma}
\newtheorem{cor}[thm]{Corollary}
\newtheorem{prop}[thm]{Proposition}

\theoremstyle{remark}
\newtheorem{rem}[thm]{Remark}
\newtheorem*{rem*}{Remark}

\theoremstyle{definition}
\newtheorem{dfn}[thm]{Definition}
\newtheorem{ex}[thm]{Example}

\numberwithin{equation}{section}

\newcommand{\Rz}{\mathbb{R}}

\graphicspath{ {Graphics/} }


\title{The tangent cone, the dimension and the frontier of a medial axis}
\author{Adam Białożyt}
\begin{document}

\maketitle
\begin{abstract}
This paper aims to to establish a relation between the tangent cone of the medial axis of $X$ at a given point $a\in\Rz^n$ and the medial axis of the set of points in $X$ realising the distance $d(a,X)$. As a consequence, a lower bound for the dimension of the medial axis of $X$ in terms of the dimension of the medial axis of $m(a)$ is obtained. This appears to be the missing link to the full description of the medial axis' dimension. Further study of potentially troublesome points on the frontier of the medial axis is also provided, resulting in their characterisation in terms of the reaching radius.
\end{abstract}
\section{Introduction}

The medial axis, introduced by Blum in \cite{Blum} as a central object in pattern recognition, under various names emerges in a number of mathematical and application problems. Lossless compression of data makes it an appealing object in tomography, robotics, or simulation. On the other hand, its natural definition appears in various versions also in the fields of partial differential equations or convex analysis. A deep connection with the initial set's geometry makes it an interesting object for the study of geometrical and topological properties such as its singularities or homotopy groups.

As medial axes are closely related to the extensively developed notion of conflict sets, one can believe that most of the theorems concerning conflict sets should have their counterparts in the medial axis theory. Unfortunately, the medial axes are (in)famous for their instability, thus the proofs are seldom transferable between the theories concerning these two objects. The main result of the present paper is a proof of the medial axis analog of \cite{BirbrairSiersma} Theorem 2.2. Since the proof presented by Birbrair and Siersma depends heavily on the monotonicity of Conflict Sets -- a phenomenon that has no counterpart in the medial axis setting -- we are forced to develop a completely new approach to the problem based on an analysis of the graph of the distance function. A problem similar to the one presented in the paper was studied in a slightly broader sense and on the grounds of the convex analysis in \cite{Cannarsa}. Focusing precisely on the medial axis, we are able to provide more rigid results and formulas. An immediate application of the result gives an answer to the question of the dimension of a medial axis raised in \cite{Erdos} and \cite{DenkowskiOnPoints}. Later in the paper, potentially troublesome points of the medial axis' frontier are characterised by a limiting directional reaching radius, an object combining the virtues of Birbrair-Denkowski reaching radius \cite{BirbrairDenkowski} and Miura's radius of curvature \cite{Miura}. It is adapted for the study of sets with higher codimension and it also provides insights on the Birbrair-Denkowski archetype.

The attention of this paper is restricted to sets that are definable in o-minimal structures expanding the field of real numbers. Such an approach gives us a framework with an appealing definition of dimension (coinciding with the Hausdorff dimension) and a handful of useful tools. At the same time, it protects us from pathological sets while conserving the applicability of the setting. Readers who are not familiar with the notion of definable sets may think of them as semialgebraic sets. An excellent introduction to the notion can be found in \cite{Coste} or \cite{Rolin}. 

Whenever in the paper the continuity (or upper- and lower limits) of a (multi-)function is mentioned, it refers to the continuity (or upper- and lower limits) in the Kuratowski sense (more on the Kuratowski convergence can be found in the book \cite{RockafellarWets}, whereas an introduction to its relation with medial axes is given in \cite{DenkowskiLimits}). For $x,y\in\mathbb{R}^n$ we denote by $[x,y]$ the closed segment joining $x$ and $y$. The closed ball centered at $a$, of radius $r$, is denoted by $\mathbb{B}(a,r)$, and $\mathbb{S}(a,r)$ denotes its boundary -- an $(n-1)$-dimensional sphere of radius $r$ centered at $a$.

For a closed, nonempty subset $X$ of $\mathbb{R}^n$ endowed with the Euclidean norm, we define the distance of a point $a\in\mathbb{R}^n$ to $X$ by 
$$d(a,X)=d_X(a):= \inf\lbrace \|a-x\| |\,x\in X\rbrace,$$

which allows us to introduce the set of closest points in $X$ to $a$ as
$$m_X(a):=\lbrace x\in X|\, d(a,X)=\|a-x\|\rbrace.$$
We will usually drop the indices of the (multi-)functions $d$ and $m$.

The main object discussed in this paper is the \textit{medial axis} of $X$ denoted by $M_X$, that is, the set of points of $\mathbb{R}^n$ admitting more than one closest point in the set $X$, namely
$$M_X:=\lbrace a\in\mathbb{R}^n|\, \# m(a)>1\rbrace.$$
A descriptive way, the most often invoked, to imagine the medial axis, brings an image of the propagation of a fire front starting at $X$. The medial axis of $X$ in this case is exactly the set of points where fronts originating from different starting points meet. This picturesque idea illustrates maybe the most profound feature of the medial axis -- it collects exactly those points of the ambient space, at which the distance function is not differentiable.

\begin{figure}[h]
    \centering
    \begin{subfigure}{0.32\textwidth}
        \includegraphics[width=\textwidth]{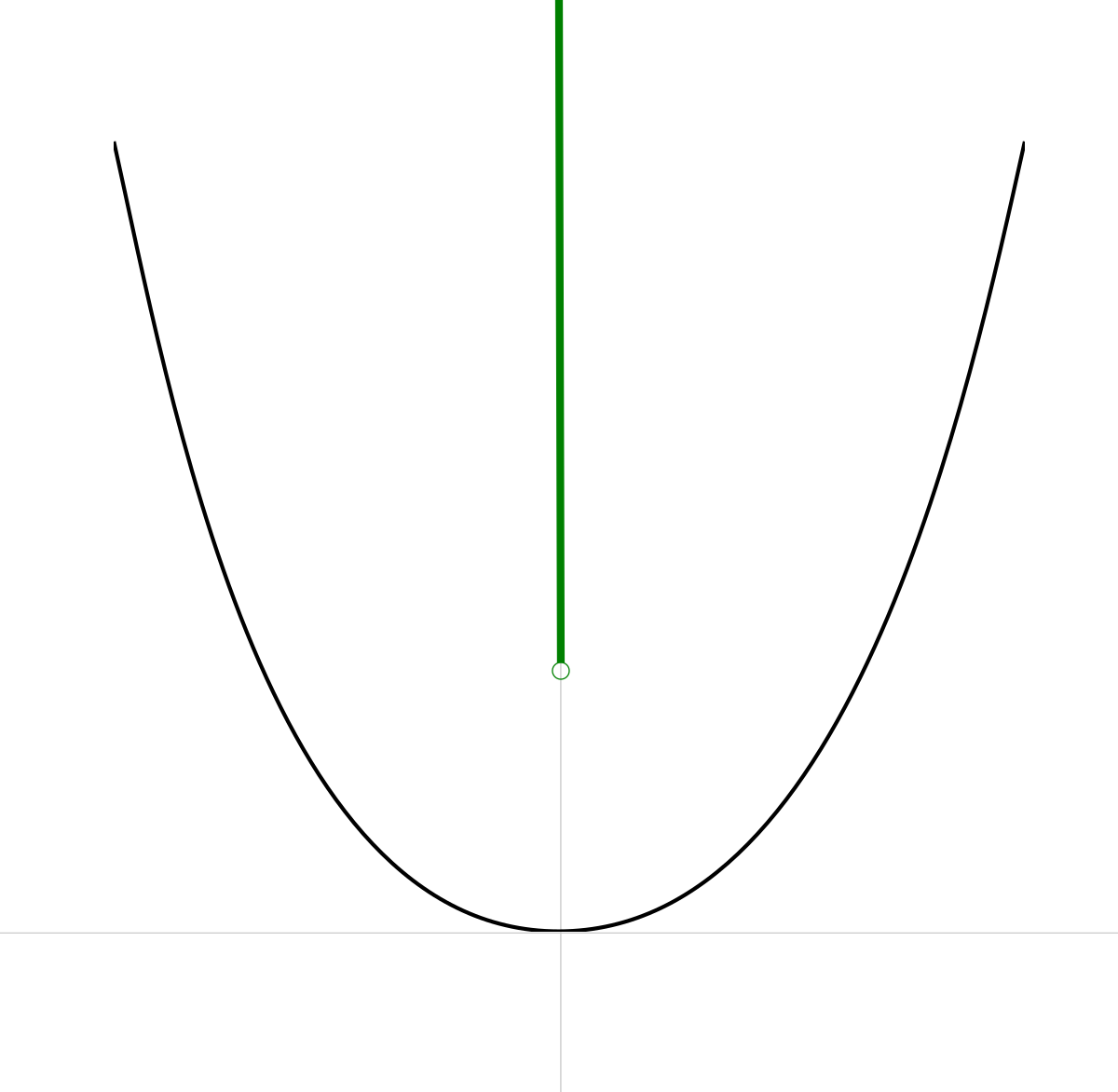}
        \caption{}
        \label{fig:sfig1}
    \end{subfigure}    
    \begin{subfigure}{0.32\textwidth}
        \includegraphics[width=\textwidth]{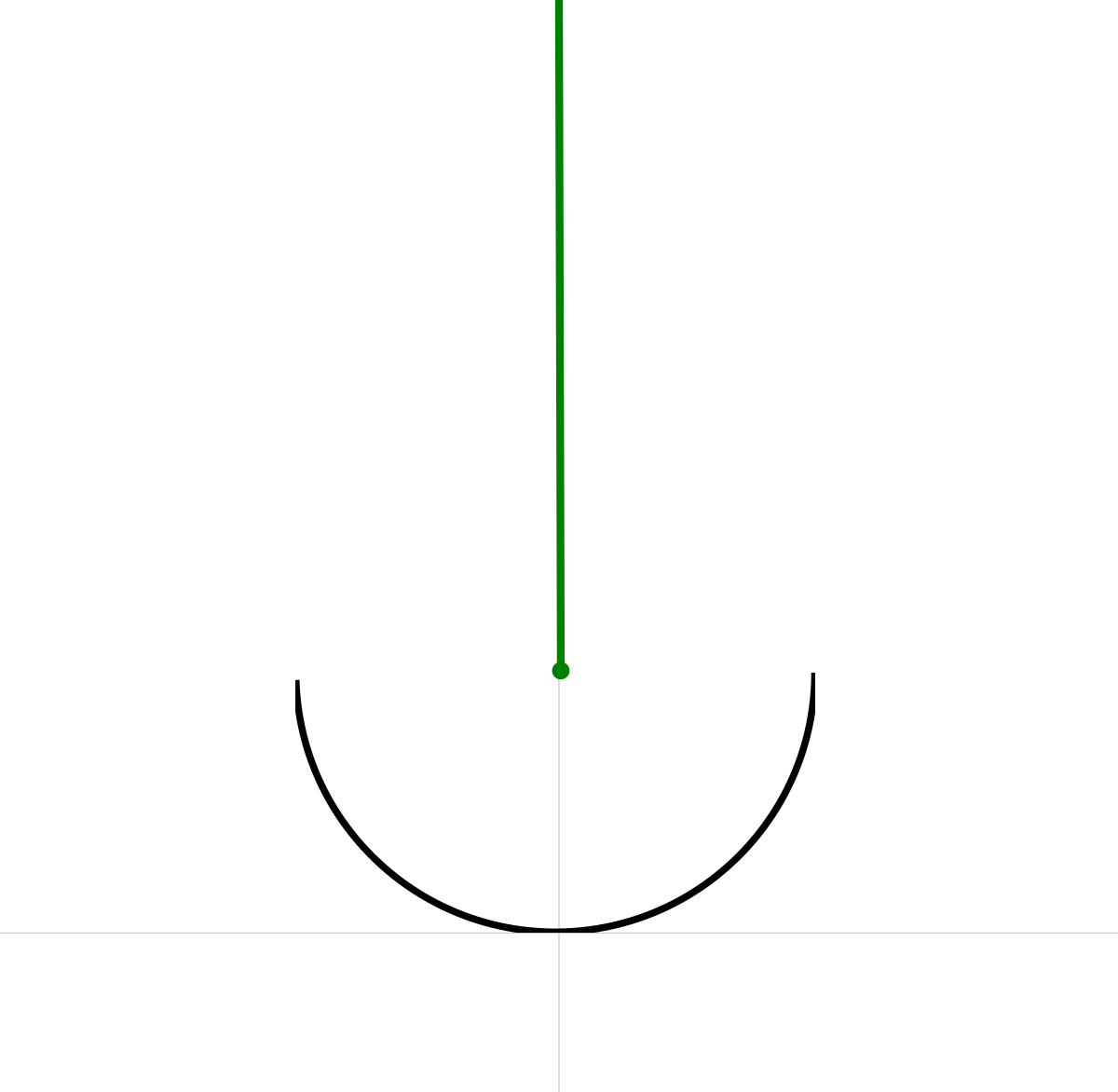}
        \caption{}
        \label{fig:sfig2}
    \end{subfigure}  
    \begin{subfigure}{0.32\textwidth}
        \includegraphics[width=\textwidth]{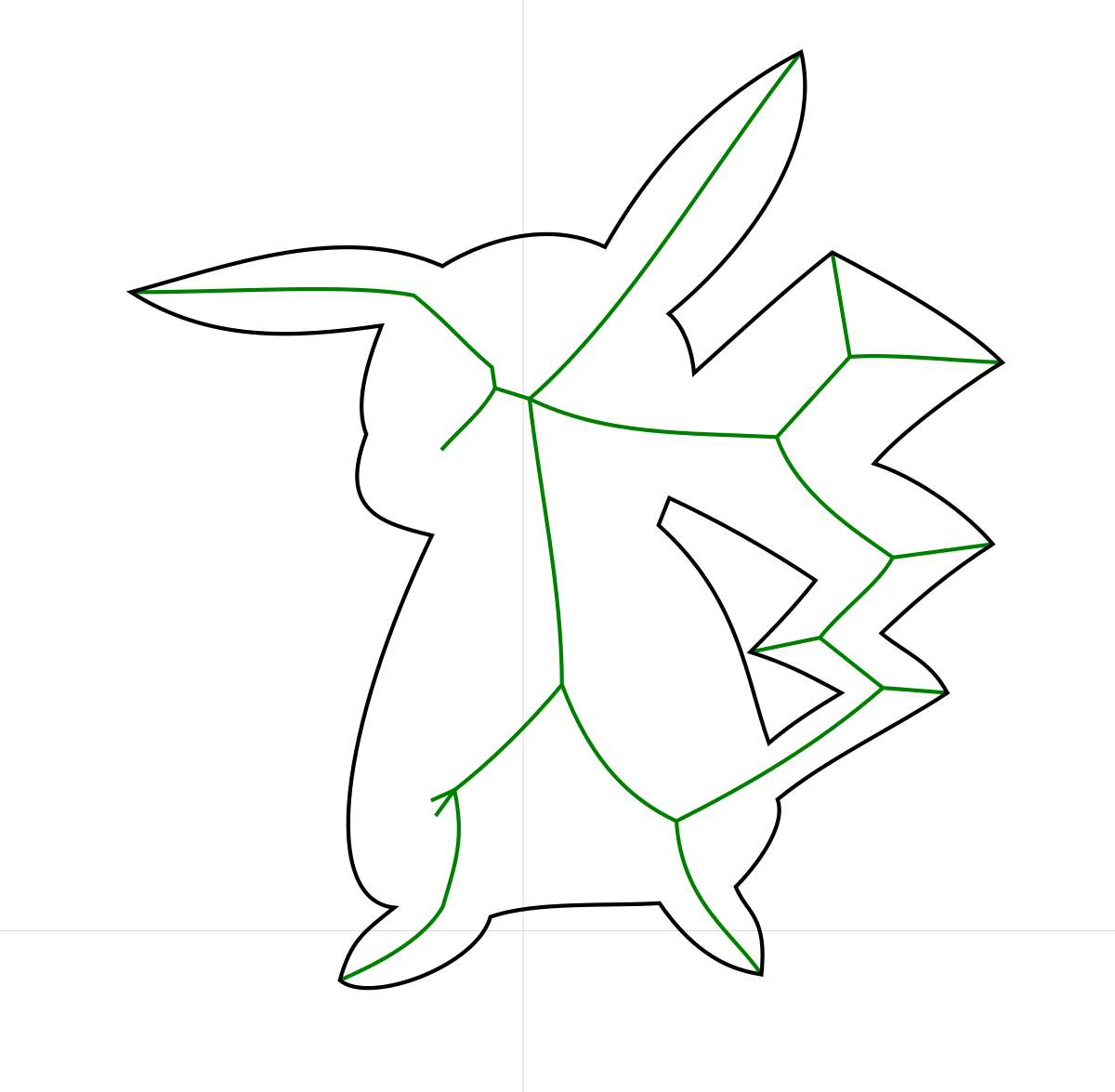}
        \caption{}
        \label{fig:sfig3}
    \end{subfigure}
    \caption{Examples of medial axes (in green) of euclidean plane subsets. (A) A graph of the function $y=x^2$ (B) A graph of the function $y=\sqrt{1-x^2}$. (C) A silhouette of a Pikachu. }
    \label{fig:my_label}
\end{figure}


As an introductory remark, it is worth recalling that as was shown in \cite{DenkowskiOnPoints}, both the medial axis and the multifunction $m(x)$ are definable in the same structure as $X$. Moreover, the multifunction $m(x)$ is \textit{upper semi-continuous}, meaning $\limsup_{A\ni a\to a_0} m(a)\subset m(a_0)$ for any set $A$ with $a_0$ in its closure.

\section{The tangent cone of the medial axis}
Let us begin by recalling that we have an explicit formula for the directional derivative of the distance function coming from Richard von Mises \cite{Mises} (most often misquoted as M.R. de Mises, see also \cite{Zajicek}).

\begin{thm}[R. von Mises]\label{Mises}
Let $X$ be a closed, nonempty subset of $\mathbb{R}^n$, then for every point  $a\in \mathbb{R}^n\backslash X$ all one-sided directional derivatives of the distance function $d_X$ exist and are equal to 
$$D_v d_X(a)=\inf \lbrace -<v,\frac{x-a}{\|x-a\|}>,\, x\in m(a)\rbrace. $$
\end{thm}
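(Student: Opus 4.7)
The plan is to squeeze the difference quotient $(d_X(a+tv)-d_X(a))/t$ from both sides by the claimed infimum and to note that this infimum is actually attained, since $m(a)$ is closed and bounded (it lies on the sphere $\mathbb{S}(a,d_X(a))$), hence compact. The infimum will therefore be a minimum over $m(a)$, evaluated on a continuous function of $x$.

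For the upper bound on the $\limsup$ I would fix any $x\in m(a)$. From $d_X(a+tv)\leq \|a+tv-x\|$ and $d_X(a)=\|a-x\|$ the difference quotient is dominated by $(\|a+tv-x\|-\|a-x\|)/t$, whose limit as $t\to 0^+$ is the familiar directional derivative of the Euclidean norm at the nonzero vector $a-x$, namely $\langle v,(a-x)/\|a-x\|\rangle=-\langle v,(x-a)/\|x-a\|\rangle$. Taking the infimum over $x\in m(a)$ yields $\limsup_{t\to 0^+}(d_X(a+tv)-d_X(a))/t \leq \inf_{x\in m(a)}-\langle v,(x-a)/\|x-a\|\rangle$.

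The lower bound on the $\liminf$ is the more subtle half. I would take a sequence $t_n \downarrow 0$ realising it and choose $x_n \in m(a+t_nv)$. Continuity of $d_X$ forces $\|a-x_n\|\to d_X(a)$, so the $x_n$ stay in a bounded set, and after passing to a subsequence $x_n\to x^*$; the upper semicontinuity of $m$ recalled in the introduction then guarantees $x^*\in m(a)$. Writing $d_X(a+t_nv)=\|a+t_nv-x_n\|$ and using $d_X(a)\leq \|a-x_n\|$ shows the difference quotient is bounded below by $(\|a+t_nv-x_n\|-\|a-x_n\|)/t_n$. Applying $\|a+tv-x\|^2-\|a-x\|^2=2t\langle v,a-x\rangle+t^2\|v\|^2$ and the difference-of-squares factorisation lets me pass to the limit along the subsequence, producing $-\langle v,(x^*-a)/\|x^*-a\|\rangle$, which is no less than the target infimum.

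The main obstacle is precisely this lower bound: the point $x_n$ moves with $t_n$, and without any control over its limit the right-hand side could drop strictly below the infimum. Upper semicontinuity of $m$, quoted in the introduction, is the decisive input that pins the accumulation points of $(x_n)$ inside $m(a)$; everything else reduces to elementary Euclidean geometry on the sphere of closest points around $a$.
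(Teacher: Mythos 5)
Your proof is correct, and it takes a genuinely different route from the paper's. The paper establishes the two-sided limit in a single pass: it normalises $a=0$, $v=(1,0,\ldots,0)$, picks $x_0\in m(0)$ minimising the angle $\alpha_0$ with $v$, picks $x_t\in m(tv)$, and applies the cosine rule to the triangle $0,tv,x_t$ to write the difference quotient exactly as
$$\frac{d(tv)-d(0)}{t}=\frac{1}{d(tv)+d(0)}\Bigl(\frac{\|x_t\|^2-\|x_0\|^2}{t}+t-2\|x_t\|\cos\alpha_t\Bigr),$$
then argues from the comparison $\|x_t-tv\|\le\|x_0-tv\|$ and closedness of $X$ that $\alpha_t\to\alpha_0$ and $(\|x_t\|^2-\|x_0\|^2)/t\to 0$. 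This is a geometric chase, and the final ``these follow from closedness'' step is rather terse. Your approach instead splits the limit into a $\limsup$ and a $\liminf$: the upper bound, obtained by fixing $x\in m(a)$ and using $d_X(a+tv)\le\|a+tv-x\|$, is essentially trivial and has no counterpart in the paper's proof; the lower bound uses the difference-of-squares identity $\|a+tv-x\|^2-\|a-x\|^2=2t\langle v,a-x\rangle+t^2\|v\|^2$ applied to moving $x_n\in m(a+t_nv)$, with a compactness/subsequence argument and upper semicontinuity of $m$ (equivalently, closedness of $X$ plus continuity of $d_X$) pinning the limit point $x^*$ in $m(a)$. This split is more modular and more standard: the only nontrivial input, USC of $m$, is made explicit and invoked once, rather than being implicitly re-derived inside an angle estimate. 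The one small caveat is that you should phrase the lower-bound conclusion carefully: you first pass to $t_n$ realising the $\liminf$ and then to a further subsequence along which $x_n$ converges, and only along that sub-subsequence do you identify the limit with $-\langle v,(x^*-a)/\|x^*-a\|\rangle\ge\inf$; since the original $t_n$ already realise the $\liminf$, this is enough, but it is worth saying.
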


\begin{proof}

For any $a,b\in\mathbb{R}^n$ there is $<a,b>=\|a\|\|b\|\cos \alpha$ where $\alpha$ is an angle between $a$ and $b$. Thus, for $\|v\|=1$ the assertion can be written as 
$$D_v d_X(a)=\inf \lbrace -\cos\alpha_x,\, x\in m(a)\rbrace $$

where $\alpha_x$ is the angle formed between $x-a$ and $v$. Clearly, the value of $-\cos\alpha$ will be the smallest for the smallest $\alpha$. 
Without loss of generality, assume $v=(1,0,\ldots,0)$ and $a=0$, then take $x_0\in m(0)$ forming the smallest angle with $v$, and $x_t\in m(tv)$ for $t>0$. Since $\|x_t-tv\|\leq \|x_0-tv\|$ we obtain 
$$\|x_t\|^2-\|x_0\|^2\leq 2t(x_{t}^{(1)}-x_{0}^{(1)}),$$
where $x_{t}^{(1)}$ is the first coordinate of $x_t$. Note that since $x_t\notin int \,\mathbb{B}(0,d(0))$ there is in particular 
$$0\leq x_{t}^{(1)}-x_{0}^{(1)}.$$
Denote now by $\alpha_t$ an angle formed by $v$ and $x_t$. By the cosinus theorem for a triangle formed by $tv,0,x_t$, we have 
$$d(tv)^2=\|x_t\|^2+t^2-2\|x_t\|t\cos \alpha_t.$$
Keeping in mind $d(0)=\|x_0\|$ we can clearly see that 
$$ \frac{d(tv)-d(0)}{t}=\frac{1}{d(tv)+d(0)}\left(\frac{\|x_t\|^2-\|x_0\|^2}{t}+t-2\|x_t\|\cos\alpha_t\right).$$
Both $d(tv)$ and $\|x_t\|$ converge to $d(0)$ as $t\rightarrow 0$, so the proof will be completed if only $\alpha_t\rightarrow\alpha_0$ and  $\frac{\|x_t\|^2-\|x_0\|^2}{t}\rightarrow 0$. However, both claims can be derived from the closedness of $X$. Indeed, closedness guarantees that for any $\varepsilon >0$, we can find such $\delta>0$ that for all $x\in X$ with the first coordinate greater than or equal to $x_{0}^{(1)}$  there is $x^{(1)}-x_{0}^{(1)}\leq \varepsilon$ as long as $\|x\|<d(0)+\delta$, otherwise $x_0$ would not realise the smallest angle among those formed by $v$ and $x\in m(0)$.

\end{proof}

\begin{figure} 
    \begin{subfigure}{0.4\textwidth}
        \includegraphics[width=\textwidth]{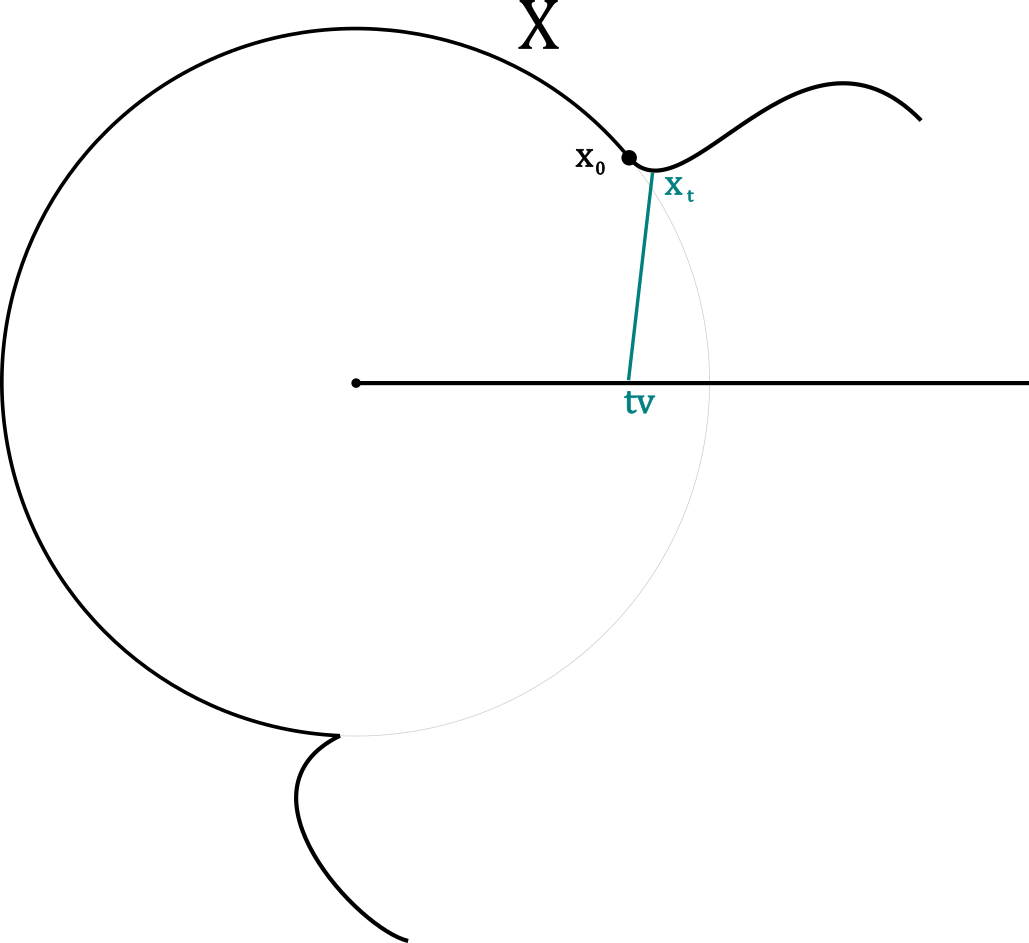}
        \caption{}
    \end{subfigure}    
    \begin{subfigure}{0.4\textwidth}
        \includegraphics[width=\textwidth]{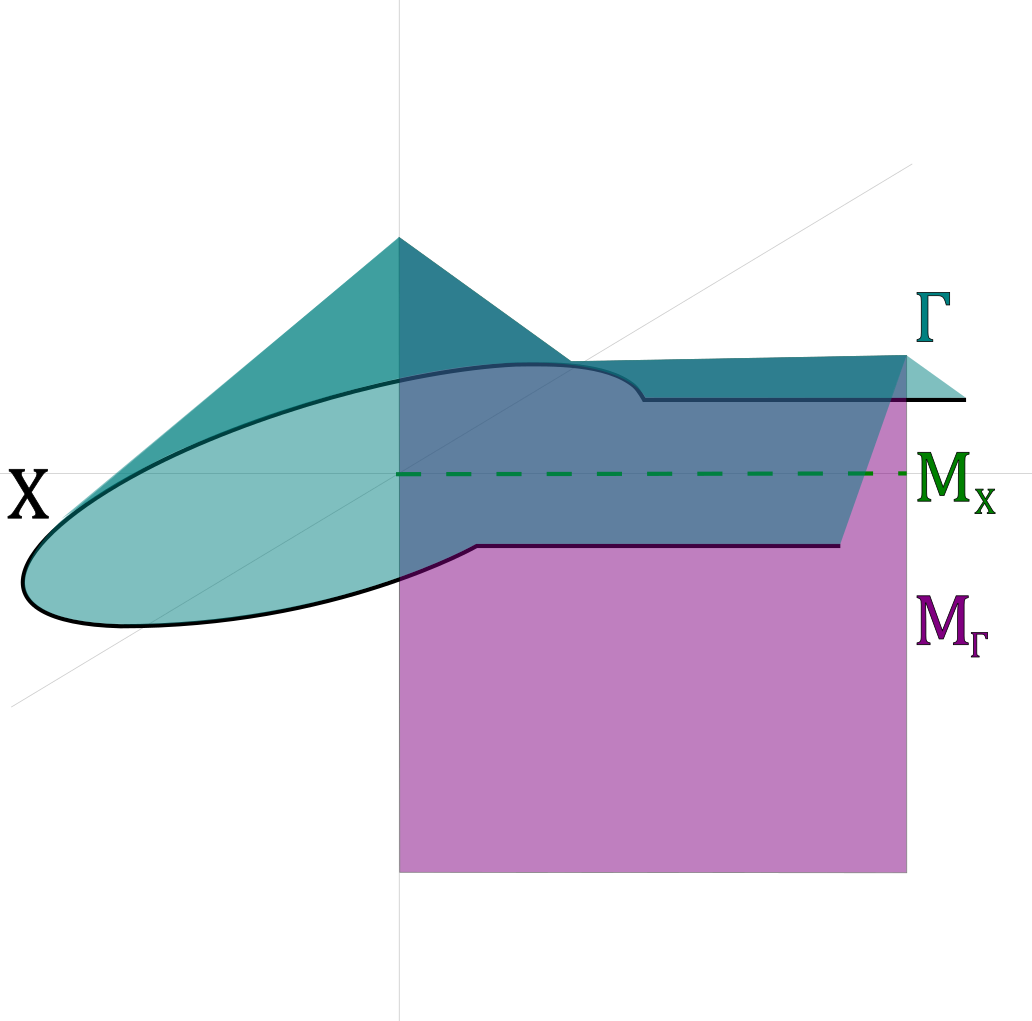}
        \caption{ }
    \end{subfigure}    
    \caption{(A) A graphic depiction of Theorem ~\ref{Mises}. (B) A graph $\Gamma$ of the distance function for a set $X=\partial (\mathbb{B}(0,2)\cup \lbrace |y|<1,\, x>0\rbrace)$ together with its medial axis. }
\end{figure}

Assuming that $m(0)$ is a subset of the unit sphere, we can use the polarization identity for the inner product to express $D_v d(0,X)$ in a convenient form $$ \frac{1}{2}\inf \lbrace \|v-y\|^2-\|v\|^2-1,\, y\in m(0)\rbrace.$$
Since the infimum is attained at $y\in m(0)$ which is closest to $v$, the formula simplifies even further down to  
$$\frac{1}{2}(d(v,m(0))^2-\|v\|^2-1).$$

The appearance of $d_{m(0)}$ in the formula for $D_vd_X(0)$ brings interesting consequences and possibilities to describe the medial axis' cone within its own category. However, our first result will be independent of the Mises Theorem, for the situation of the subsets of the sphere is simpler than the general one.

\begin{prop}\label{Szkielet podzbioru sfery}
Let $Y\subset \Rz^n$ be a closed proper subset of the unit sphere $\mathbb{S}$, then $M_Y$ is a cone spanned over $M^{\mathbb{S}}_{Y}$ calculated in $\mathbb{S}$ with the induced norm. Moreover, in that case $C_0M_Y=\overline{M_Y}$.
\end{prop}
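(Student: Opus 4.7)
The plan is to exploit the fact that when $Y$ is contained in the unit sphere, minimizing $\|a-y\|$ over $y\in Y$ reduces to maximizing the inner product $\langle a/\|a\|,y\rangle$, which depends only on the direction of $a$. This directly yields the cone structure, and the tangent-cone equality will follow from a general property of positive cones.

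For the first assertion, I would begin with the identity $\|tu-y\|^2 = t^2+1-2t\langle u,y\rangle$, valid for $t>0$, $u\in\mathbb{S}$, and any $y\in Y\subset\mathbb{S}$. Since $t>0$, minimizing this over $Y$ is equivalent to maximizing $\langle u,y\rangle$, and the very same maximization characterizes the closest points to $u$ on $\mathbb{S}$ with the induced metric, because $\|u-y\|^2 = 2-2\langle u,y\rangle$. Hence $m_Y(tu) = m^{\mathbb{S}}_Y(u)$, and in particular $tu\in M_Y$ if and only if $u\in M^{\mathbb{S}}_Y$. At the origin itself, $m_Y(0)=Y$, so $0\in M_Y$ precisely when $\#Y\geq 2$, which is also equivalent to $M^{\mathbb{S}}_Y\neq\emptyset$ (the degenerate case $\#Y=1$ trivializes both sides). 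Combining these observations gives $M_Y=\{tu:\,t\geq 0,\ u\in M^{\mathbb{S}}_Y\}$, the cone spanned over $M^{\mathbb{S}}_Y$.

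For the identity $C_0M_Y=\overline{M_Y}$, I would invoke the positive-scaling invariance of $M_Y\setminus\{0\}$, which is an immediate consequence of the first step. Given any $v\in M_Y\setminus\{0\}$, the sequence $v/k$ lies in $M_Y$, converges to $0$, and satisfies $k\cdot(v/k)=v$, placing $v$ in $C_0M_Y$; passing to the closure then yields $\overline{M_Y}\subset C_0M_Y$. Conversely, any $v\in C_0M_Y$ can be written as $\lim\lambda_ka_k$ with $a_k\in M_Y$ and $\lambda_k>0$, and scaling invariance forces $\lambda_ka_k\in M_Y$, whence $v\in\overline{M_Y}$. I do not anticipate any serious obstacle here; the one small subtlety is that $M^{\mathbb{S}}_Y$ may fail to be closed, so the raw cone over $M^{\mathbb{S}}_Y$ need not equal $\overline{M_Y}$ on the nose, but the two assertions together describe both the set $M_Y$ and its closure.
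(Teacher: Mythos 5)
Your proof is correct and follows essentially the same approach as the paper's: the key observation in both is that for $Y\subset\mathbb{S}$ the set of nearest points $m_Y(tu)$ depends only on the direction $u$, which you establish via the inner-product identity $\|tu-y\|^2=t^2+1-2t\langle u,y\rangle$ while the paper phrases the same fact geometrically in terms of intersections of Euclidean balls with $\mathbb{S}$. Your handling of the tangent-cone equality via positive-scaling invariance of $M_Y$ is likewise the argument the paper invokes (there labelled ``trivial'').
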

\begin{proof}

Start by observing that since $Y$ is a subset of $\mathbb{S}$ and an open ball in the induced norm is an intersection of a ball in $\mathbb{R}^n$ with $\mathbb{S}$, clearly $M^{\mathbb{S}}_{Y}\subset M_Y$. What is left to prove is that $M_Y$ is a cone. This follows from the observation that the intersection of $\mathbb{S}$ and any ball $\mathbb{B}(x,d(x,m(x))$, is always equal to a closed ball in $\mathbb{S}$ centered at $x/\|x\|$ with radius $d(x/\|x\|,m(x))$. 

The second part of the theorem is trivial, as $M_Y$ is a cone, and every vector $\lambda v, \lambda\in\Rz_+, v\in M_Y$ approximating an element of a tangent cone $C_0M_Y$ is in $M_Y$.
\end{proof}

Even this simple situation demands some delicacy. For one, it is indeed necessary to take the closure of $M_Y$ in Proposition~\ref{Szkielet podzbioru sfery}. As seen for $Y=\lbrace xy=0\rbrace\cap\mathbb{S}\subset\mathbb{R}^3$, a point $v=(0,0,1)$ lies in $Y$ so it cannot be in $M_Y$. Nevertheless, it is easy to check that $v$ is a point of the tangent cone at the origin of the medial axis.
Additionally, since closed balls both in the spherical and in the induced norm are the same, the choice of the norm used to calculate $M^\mathbb{S}_Y$ in Proposition~\ref{Szkielet podzbioru sfery} does not affect the assertion.

Before we venture forth, we will prove a technical lemma helpful in the description of the geometry of a graph of the distance function.
\begin{lem}\label{wlasnosci}
For any closed $X\subset\mathbb{R}^n$ a graph $$\Gamma:=\lbrace (x,y)\in\mathbb{R}^n\times\mathbb{R}|\,y=d(x,X)\rbrace$$ has the following properties:
\begin{enumerate}
    \item For any $ (a,d(a))\in \Gamma$, 
    
    $\lbrace (x,y)\in\Rz^n\times \Rz|\; |y-d(a)|>\|x-a\|\rbrace\cap \Gamma =\emptyset$;
    \item For any $ a\in \Rz^n$ and $v\in m(a)$,   $[(v,0),(a,d(a))]\subset\Gamma$;
    \item For any $ (x,y)\in\Rz^n\times\Rz$ with $y<d(x)$, 
    
    $(x,y)\in M_\Gamma\iff x\in M_X$, in other words: the medial axis of the epigraph of d is equal to $M_X\times \Rz\cap \lbrace (x,y)|\,y<d(x)\rbrace.$
\end{enumerate}
\end{lem}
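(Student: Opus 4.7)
My plan is to dispatch (1) and (2) as direct consequences of the $1$-Lipschitz property of $d_X$, and then use them to reduce (3) to an explicit one-variable minimisation. For (1), pick any $v\in m(a)$ (nonempty by closedness of $X$); the triangle inequality $d(x)\leq\|x-v\|\leq\|x-a\|+d(a)$ and its symmetric counterpart give $|d(x)-d(a)|\leq\|x-a\|$, which is exactly the claim. For (2), I would parametrise $[(v,0),(a,d(a))]$ as $\gamma(t)=(v+t(a-v),\,t d(a))$ with $t\in[0,1]$: the bound $d(v+t(a-v))\leq t d(a)$ is immediate from $v\in X$, and the matching lower bound is precisely (1) applied to $a$ and $v+t(a-v)$.

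For (3), fix $(x,y)$ with $y<d(x)$. The plan is to determine $m_\Gamma(x,y)$ and $d((x,y),\Gamma)$ explicitly. Part (2) gives, for each $v\in m(x)$, a segment $[(v,0),(x,d(x))]\subset\Gamma$, and a direct planar computation shows that the orthogonal projection of $(x,y)$ onto this segment is the point
\begin{equation*}
p_v:=\Bigl(x+\tfrac{d(x)-y}{2d(x)}(v-x),\ \tfrac{d(x)+y}{2}\Bigr),
\end{equation*}
realising the distance $\tfrac{d(x)-y}{\sqrt 2}$. For the matching lower bound, I would take any $(w,d(w))\in\Gamma$ and set $s:=\|w-x\|$; assuming $s\leq d(x)-y$ (outside this range the trivial estimate $s^2>\tfrac{(d(x)-y)^2}{2}$ already suffices), part (1) gives $d(w)-y\geq d(x)-y-s\geq 0$, so
\begin{equation*}
\|(w,d(w))-(x,y)\|^2\;\geq\;s^2+(d(x)-y-s)^2\;\geq\;\tfrac{(d(x)-y)^2}{2},
\end{equation*}
with equality in the second step only at $s=\tfrac{d(x)-y}{2}$.

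Equality throughout forces $d(w)=d(x)-\|w-x\|$. This is the crucial step: picking $u\in m(w)$, the chain $d(x)\leq\|x-u\|\leq\|x-w\|+d(w)=d(x)$ is tight, so $u\in m(x)$ and $w\in[x,u]$ by equality in the triangle inequality. Hence every closest point of $\Gamma$ to $(x,y)$ equals $p_v$ for some $v\in m(x)$, and the map $v\mapsto p_v$ is an affine injection, yielding a bijection $m(x)\leftrightarrow m_\Gamma(x,y)$. The equivalence $(x,y)\in M_\Gamma\iff x\in M_X$ follows. The main obstacle is this last equality analysis: it is where the soft Lipschitz inequality of (1) must be sharpened into the rigid geometric fact that closest points of $\Gamma$ from below are parametrised exactly by $m(x)$, reflecting that (1) and (2) together force $\Gamma$ to have $45^\circ$ cones that precisely determine it locally.
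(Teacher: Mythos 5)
Your proofs of (1) and (2) are correct and match the paper's in substance: (1) is the $1$-Lipschitz property of $d_X$, and (2) is the same computation $\|tv+(1-t)a-v\|=(1-t)\|a-v\|$ the paper uses. For (3) your approach is also essentially the paper's: both characterise $m_\Gamma(x,y)$ explicitly by combining the segments from (2) with the Lipschitz cone from (1), and both reduce the identification of all closest points to the rigidity of the triangle inequality, namely that $d(w)=d(x)-\|w-x\|$ forces $w\in[x,u]$ for some $u\in m(x)$ (compare the paper's chain $\|a-x'\|\le\|x-x'\|+\|x-a\|=\|a-v\|$ leading to $m(x)\subset m(a)$). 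The paper phrases this as first proving $C(a)\cap\Gamma=\bigcup_{v\in m(a)}[(v,0),(a,d(a))]$ and then reading off $m_\Gamma(p)$ for $p$ on the cone axis; your version projects directly, but the content is the same.

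There is, however, a small but genuine gap. Your projection point $p_v$ and the distance $\tfrac{d(x)-y}{\sqrt2}$ are only valid when $y\geq -d(x)$: the projection parameter along the segment $[(v,0),(x,d(x))]$ is $t^*=\tfrac{d(x)+y}{2d(x)}$, which becomes negative for $y<-d(x)$, so the orthogonal projection onto the segment is the endpoint $(v,0)$ at distance $\sqrt{d(x)^2+y^2}$, strictly larger than $\tfrac{d(x)-y}{\sqrt2}$. Correspondingly, your equality case $s=\tfrac{d(x)-y}{2}$ is unattainable there, since it would require $d(w)=\tfrac{d(x)+y}{2}<0$, contradicting $d\geq 0$ -- a constraint your lower-bound estimate never invokes. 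So as written, the argument does not show that the closest points equal $\{p_v:v\in m(x)\}$ for $y<-d(x)$; one must separately observe that $m_\Gamma(x,y)=m(x)\times\{0\}$ in that range, which is still in bijection with $m(x)$. The paper builds exactly this dichotomy into its formula for $m_\Gamma(p)$, distinguishing $p^{(n+1)}\le -d(a)$ from $p^{(n+1)}>-d(a)$. With that case added, your proof is complete.
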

\begin{proof}
(1) is a  consequence of the Lipschitz condition for the distance function. 

(2) comes from $m(tv+(1-t)a)=\lbrace v\rbrace$ for $t\in(0,1],v\in m(a)$ together with $$\|tv+(1-t)a-v\|=\|(1-t)a-(1-t)v\|=(1-t)\|a-v\|.$$ 

(3) can be proved by observing that (1) and (2) give together: 
$$\lbrace (x,y)\in\Rz^n\times \Rz|\; y-d(a)\leq-\|x-a\|\rbrace\cap \Gamma =\bigcup_{v\in m(a)} [(v,0),(a,d(a))]$$
 for every $(a,d(a))\in \Gamma$. Indeed, if $(x,y)\in\bigcup [(v,0),(a,d(a))]$ then (2) gives $(x,y)\in \Gamma$, moreover for a certain $t\in [0,1]$ there is
 $$(x,y)=t(v,0)+(1-t)(a,d(a))=(tv+(1-t)a,(1-t)d(a)).$$
 Thus,
 $$y-d(a)+\|x-a\|=-td(a)+\|tv-ta\|=t(-d(a)+\|a-v\|)\leq 0,$$
as pleaded.

On the other hand, taking $(x,y)\in \lbrace y-d(a)\leq-\|x-a\|\rbrace\cap \Gamma$  by (1) there is 
$$d(x)-d(a)=y-d(a)=- \|x-a\|.$$ 
Therefore, for any $v\in m(a)$ and $x'\in m(x)$ there is
$$\|a-x'\|\leq\|x-x'\|+\|x-a\|=\|a-v\|$$
Thus, $m(x)\subset m(a)$ and since $y<d(a)$ there must exist $t\in [0,1]$ such that $(x,y)=(tv+(1-t)a,(1-t)d(a))$ for a certain $v\in m(a)$.

It is easy to check now that for every point $p$ of an axis of a cone $C(a):=\lbrace (x,y)|\, y-d(a)\leq -\|x-a\|\rbrace$, a set $m_\Gamma(p)$ has the same number of points as $m(a)$. Truly, whenever $m(a)$ is a singleton, an intersection $C(a)\cap\Gamma$ is a single segment on the boundary of $C(a)$, thus $m_\Gamma(p)=M_{C(a)\cap\Gamma}(p)$ must be an singleton as well. If $m(a)$ consists of more than one point on the other hand, then the intersection $C(a)\cap\Gamma$ is a sum of segments on the boundary of $C(a)$ with endpoints at the vertex of $C(a)$ and points of $m(a)\times \lbrace 0\rbrace$. Therefore, 
$$m_\Gamma(p)=\begin{cases}
m(a)\times\lbrace 0\rbrace, & p^{(n+1)}\leq-d(a),\\
\frac{d(a)-p^{(n+1)}}{2d(a)}m(a)\times \lbrace \frac{d(a)+p^{(n+1)}}{2}\rbrace, & p^{(n+1)}>-d(a)
\end{cases},$$
where $p^{(n+1)}<d(a)$ denotes  the last coordinate of $p$.


\end{proof}
With the properties of the graph of the distance function at hand, we are ready to prove.
\begin{thm}\label{Stozek}
For any closed definable $X\subset \Rz^n$ assuming $0\in \overline{M_X}$ there is $M_{m(0)}\subset C_0M_X$.
\end{thm}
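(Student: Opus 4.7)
The plan is to transfer the multiplicity of closest points from $m(0)$ to $X$ by probing the graph $\Gamma$ of $d_X$ and invoking Lemma~\ref{wlasnosci}. The case $v = 0$ is immediate since $0 \in \overline{M_X}$ forces $0 \in C_0 M_X$, so I fix $v \in M_{m(0)} \setminus \{0\}$. By definition there exist distinct $y_1, y_2 \in m(0)$ with $\|v - y_1\| = \|v - y_2\|$, and because $m(0) \subset \mathbb{S}(0, d(0))$ this rewrites as $\langle v, y_1\rangle = \langle v, y_2\rangle =: \alpha$. Cauchy--Schwarz is moreover strict, $\alpha < \|v\|\,d(0)$, since equality would force $v$ parallel to a single point of $m(0)$, contradicting $v \in M_{m(0)}$.

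For every small $t > 0$ I probe $\Gamma$ at $q_t := (tv,\, d(0) - s_t) \in \mathbb{R}^{n+1}$, where $s_t$ is chosen so that $t\|v\| \ll s_t \to 0$; for concreteness $s_t = \sqrt{t}$, which places $q_t$ strictly inside the downward cone $C(0) := \{(x,y) : y - d(0) \leq -\|x\|\}$. Lemma~\ref{wlasnosci}(1) and (2), together with the descent-path argument used in the proof of Lemma~\ref{wlasnosci}(3), identify $\Gamma \cap C(0)$ with $K := \bigcup_{y \in m(0)} [(y, 0), (0, d(0))] \subset \partial C(0)$. A direct quadratic minimisation of the squared distance from $q_t$ to the segment of $K$ joining $(0, d(0))$ to $(y, 0)$, parameterised by $\tau \in [0,1]$, yields the optimum $\tau^*(y) = (t\langle v, y\rangle + s_t d(0))/(2 d(0)^2)$, after which the minimum over $y \in m(0)$ is attained precisely at the maximisers of $\langle v, \cdot\rangle$. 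Hence $q_t$ admits at least two closest points on $K$, one from each of $y_1, y_2$.

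The crux is to promote these to closest points on the whole of $\Gamma$. Any competitor $(x, d(x)) \in \Gamma \setminus K$ lies outside $C(0)$, so $d(x) > d(0) - \|x\|$; Theorem~\ref{Mises} gives that the ``lift'' $d(\rho u) - (d(0) - \rho)$ equals $\rho(\|u\| - \sup_{y \in m(0)} \langle u, y\rangle/d(0)) + o(\rho)$, strictly positive for every unit $u$ not parallel to any point of $m(0)$. Optimising the squared distance from $q_t$ to $(\rho u, d(\rho u))$ in $\rho$ produces, for $s_t \gg t$, a leading-order value $s_t^2/(1 + \cos^2 \theta^*(u))$ with $\cos \theta^*(u) := \sup_y \langle u, y\rangle/d(0) \in [0, 1]$; this strictly exceeds the $K$-value $D_K(q_t)^2 \sim s_t^2/2$ whenever $\cos \theta^*(u) < 1$, with the infimum approached only as $u$ tends to an $m(0)$-direction, that is, as the competitor tends to $K$. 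Therefore the minimum over $\Gamma$ is realised on $K$ with multiplicity at least two, so $q_t \in M_\Gamma$. Lipschitzianity of $d_X$ gives $d(tv) \geq d(0) - t\|v\| > d(0) - s_t$, so Lemma~\ref{wlasnosci}(3) transfers this to $tv \in M_X$; letting $t \to 0^+$ exhibits $v \in C_0 M_X$. The main obstacle is precisely this global distance comparison: the superlinear scaling $s_t/t \to \infty$ is essential, as any linear choice $s_t = c t\|v\|$ would permit a competitor near $(tv, d(tv)) \in \Gamma \setminus K$ to undercut the $K$-minimum, and the uniform control of the Mises $o(\rho)$ term across directions will likely rest on the definability assumption.
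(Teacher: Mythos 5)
Your strategy shares the paper's starting point (the graph $\Gamma$ of $d_X$ and Lemma~\ref{wlasnosci}) but then diverges into a pointwise claim that is false, so the argument has a genuine gap.

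The crux is the assertion that $q_t = (tv, d(0)-s_t)$ belongs to $M_\Gamma$. By Lemma~\ref{wlasnosci}(3), since $d(0)-s_t < d(tv)$, this is \emph{equivalent} to $tv\in M_X$ — a pointwise statement about the medial axis, not a tangent-cone statement. That cannot hold in general. Take $X = C_1\cup C_2\subset\mathbb{R}^2$ with $C_1 = \mathbb{S}((0,2),1)$ and $C_2 = \mathbb{S}((0,-3/2),1/2)$. Then $d(0)=1$, $m(0)=\{(0,1),(0,-1)\}$, and $v=(1,0)\in M_{m(0)}$. But for small $t\neq 0$ one computes $d((t,0),C_1)=1+t^2/4+O(t^4)$ while $d((t,0),C_2)=1+t^2/3+O(t^4)$, so $(t,0)\notin M_X$: the conflict curve is tangent to the $x$-axis at $0$ but bends away from it at second order. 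Here $q_t$ is equidistant from the two segments of $K$ (by the symmetry $\langle v,y_1\rangle=\langle v,y_2\rangle=0$), yet $q_t\notin M_\Gamma$; hence the true closest point of $\Gamma$ to $q_t$ does \emph{not} lie on $K$. This exposes where the estimate fails: your comparison gives competitor distances strictly above the $K$-distance only with a gap that collapses as the competitor direction approaches an $m(0)$-direction, and the curvature of $X$ (the Mises $o(\rho)$ term you flag) makes $\Gamma$ hug the cone $C(0)$ near the vertex, so the minimiser lives just off $K$. No choice of $s_t$, superlinear or otherwise, repairs this, because the conclusion $tv\in M_X$ it would deliver is simply not true.

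The paper avoids the problem by never leaving the tangent-cone level. It cites \cite{BirbrairDenkowski} Theorem~4.6 for the inclusion $M_{C_{(0,1)}\Gamma}\subset C_{(0,1)}M_\Gamma$, and then identifies, inside a subcone $\{y\le\alpha\|x\|\}$ with $\alpha<-1$, the left-hand side with $M_{m(0)}\times\mathbb{R}$ (via Mises' formula for $C_{(0,1)}\Gamma$) and the right-hand side with $C_0M_X\times\mathbb{R}$ (via Lemma~\ref{wlasnosci}(3) and conicity of $\mathbb{R}$). That inclusion is precisely the ``medial axis of the tangent cone sits inside the tangent cone of the medial axis'' statement your construction is implicitly trying to reprove by hand; your probing-points idea could in principle be salvaged into a proof of that lemma, but it would need to compare distances to $\Gamma$ with distances to $C_{(0,1)}\Gamma$ and extract a tangent-cone conclusion, not the pointwise one you currently draw.
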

\begin{proof}
If $0\in \overline{M_X}\backslash M_X$, the theorem is trivial as $M_{m(0)}=\emptyset$. 
Assume then, without loss of generality, that $0\in M_X$ and $d(0)=1$. Denote by $\Gamma$ the graph of the distance function $d$ as was done in the previous lemma.
As was shown in \cite{BirbrairDenkowski} during the proof of Theorem 4.6,  $M_{C_{(0,1)}\Gamma}\subset C_{(0,1)}M_\Gamma$. To prove the theorem, we will establish the relation between these sets and $M_{m(0)}$ and $C_0M_X$.

Lets begin with $C_{(0,1)}M_\Gamma$. Since Lemma~\ref{wlasnosci}(3) gives  $$(M_\Gamma-(0,1))\cap \lbrace y\leq -\|x\|\rbrace = M_X\times \Rz \cap\lbrace y\leq-\|x\|\rbrace,$$ the tangent cones of $M_\Gamma-(0,1)$ and $M_X\times \Rz$ must coincide in a cone $\lbrace y\leq \alpha\|x\|\rbrace$ for any choice of $\alpha<-1$. Because $\Rz$ is a cone, we further obtain the coincidence of $C_{(0,1)}M_\Gamma$ and $C_0 M_X\times \Rz$ in the aforementioned cone.

As it comes to $M_{m(0)}$ and $M_{C_{(0,1)}\Gamma}$, we will study first a set $C_{(0,1)}\Gamma$. Since $d_X$ is a Lipschitz function, the explicit formula for a directional derivative $D_xd_X(0)$ allows us to express $C_{(0,1)}\Gamma$ as a graph of a function
$$x\rightarrow D_xd_X(0)= \frac{1}{2}(d(x,m(0))^2-\|x\|^2-1).$$ 
Consider for a moment a graph $\Gamma_1$ of a function $x\rightarrow d(x,m(0))$. For $\|x\|<1$ it has a structure of a cone with a vertex at $(0,1)$, furthermore a tangent cone $C_{(0,1)}\Gamma_1$ can be expressed as a graph of the same function as in the case of $\Gamma$ namely $x\rightarrow D_xd_X(0)$. The medial axis of the epigraph $d(x,m(0))$ after the translation by $(0,-1)$ has to coincide with $M_{C_{(0,1)}\Gamma}$, thus their intersections with the cone $\lbrace y\leq \alpha\|x\|\rbrace$ are also equal.

We have obtained $$C_0M_X\times\Rz\cap\lbrace y\leq \alpha\|x\|\rbrace= C_{(0,1)}M_\Gamma\cap\lbrace y\leq \alpha\|x\|\rbrace$$ and $$M_{m(0)}\times \Rz\cap\lbrace y\leq \alpha\|x\|\rbrace=M_{C_{(0,1)}\Gamma}\cap\lbrace y\leq \alpha\|x\|\rbrace.$$ Since, as we mentioned at the beginning, $M_{C_{(0,1)}\Gamma}\subset C_{(0,1)}M_\Gamma$ the assertion follows.
\end{proof}

As the following example shows, the equality between $C_a M_{X}$ and $M_{m(a)}$ cannot be expected in general.

\begin{ex}\label{m(0) nierowne stozkowi}
Let $X=\lbrace (x,y,z)\in \mathbb{R}^3|\; z^2=1, (x+y)(x-y)=0 \,\rbrace$, then there is
\begin{itemize}
    \item $M_X=\lbrace (x,y,z)\in\Rz^3| xy=0,x\neq y\rbrace\cup\lbrace(x,y,z)\in\Rz^3|z=0\rbrace$,
    \item $m(0)=\lbrace (0,0,1),(0,0,-1)\rbrace$,
    \item $M_{m(0)}=\lbrace (x,y,z)\in\Rz^3|z=0\rbrace$.
\end{itemize}

It is easy to check that indeed $M_{m(0)}$ is a proper subset of $C_0 M_X$.
\end{ex}
\begin{figure} 
    
        \includegraphics[width=0.5\textwidth]{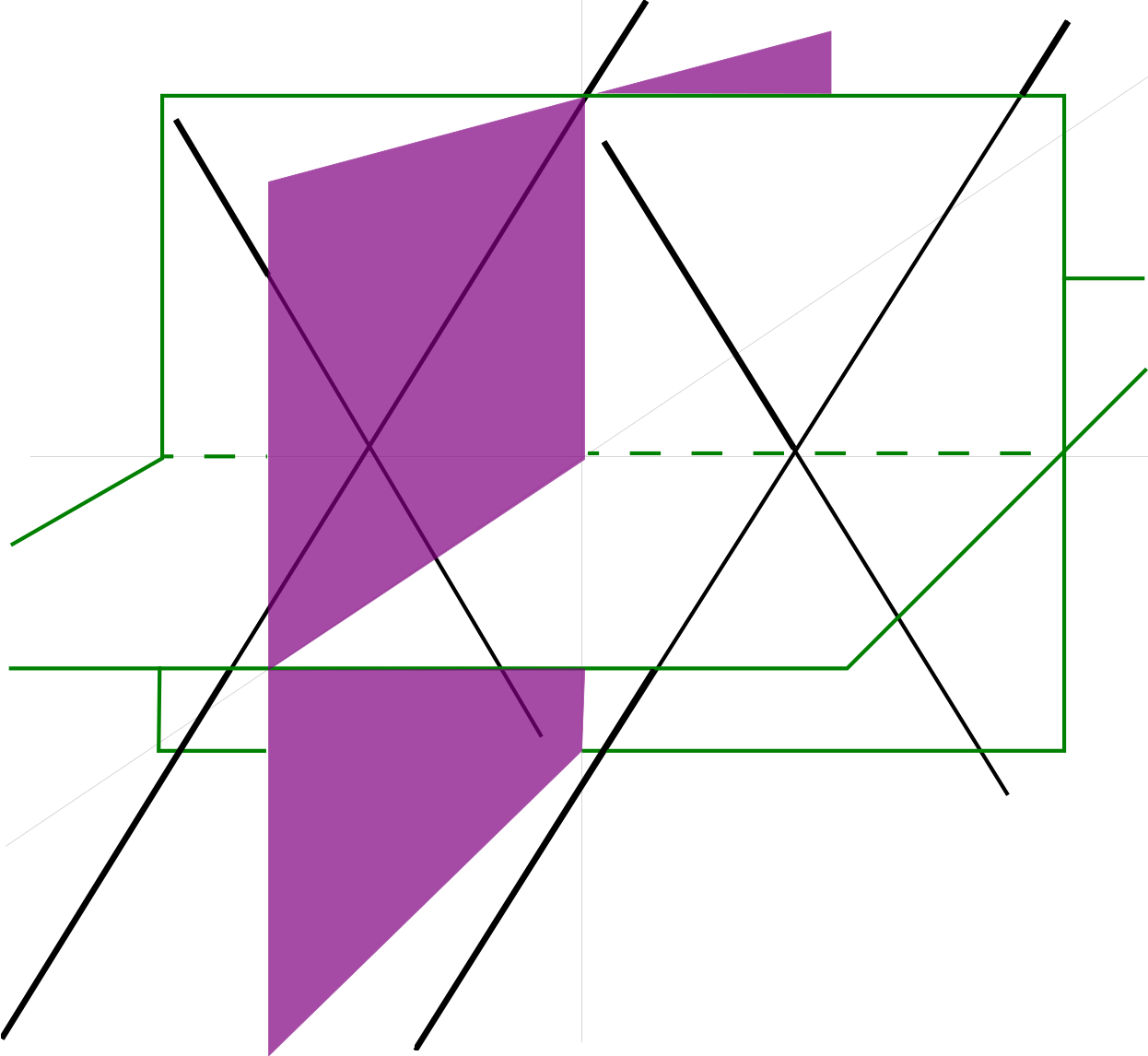}

    \caption{Example \ref{m(0) nierowne stozkowi}. Even though the medial axis of the double X consists of all off the visible surfaces, only the shaded purple one is a part of $M_{m(0)}$. }
\end{figure} 
Problems in the reconstruction of the whole tangent cone of $M_X$ basing solely on $m(a)$ come from sequences of points $x_\nu$ in the medial axis with $m(x_\nu)$ diverging to a singleton. Assuming no such sequence can be found, we can prove the following.

\begin{cor}\label{rownosc}
Assume that $0\in M_X$ for a closed definable $X\subset \Rz^n$. If there exists a neighbourhood of the origin $U$ and $r>0$ such that for any $a\in U\cap M_X$ there is $\text{diam } m(a)>r$, then $C_0M_X=M_{m(0)}$.
\end{cor}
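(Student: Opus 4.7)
My plan is to establish the reverse inclusion $C_0 M_X \subset M_{m(0)}$, since the other direction is exactly Theorem~\ref{Stozek}. I first dispose of the degenerate case $v=0$: the hypothesis applied at $0 \in U \cap M_X$ forces $\text{diam}\, m(0) > r$, so $m(0)$ contains at least two distinct points, each at distance $d(0)$ from the origin, and hence $0 \in M_{m(0)}$.

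Assume therefore (after rescaling) that $d(0) = 1$ and fix a nonzero $v \in C_0 M_X$. By definition of the tangent cone I may choose $x_\nu \in M_X$ with $x_\nu \to 0$ and $\varepsilon_\nu \to 0^+$ such that $x_\nu/\varepsilon_\nu \to v$. The crucial rate estimate is $\|x_\nu\|^2/\varepsilon_\nu = \|x_\nu\| \cdot (\|x_\nu\|/\varepsilon_\nu) \to 0 \cdot \|v\| = 0$. For $\nu$ sufficiently large, $x_\nu \in U$, so the diameter hypothesis produces $a_\nu, b_\nu \in m(x_\nu)$ with $\|a_\nu - b_\nu\| > r$. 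Compactness together with the upper semicontinuity of $m$ recalled in the introduction lets me pass to a subsequence with $a_\nu \to a$ and $b_\nu \to b$, where $a, b \in m(0)$ and $\|a-b\| \geq r$, so $a \neq b$.

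The heart of the argument is a two-sided squeeze on $(d(x_\nu)^2 - 1)/\varepsilon_\nu$. For any $c \in m(0) \subset X$ the inequality $d(x_\nu)^2 \leq \|x_\nu - c\|^2$ expands to
$$d(x_\nu)^2 - 1 \leq \|x_\nu\|^2 - 2\langle x_\nu, c \rangle,$$
so dividing by $\varepsilon_\nu$ and taking $\limsup$ gives $\limsup_\nu (d(x_\nu)^2 - 1)/\varepsilon_\nu \leq -2\langle v, c\rangle$ for every $c \in m(0)$. For the matching lower bound, $a_\nu \in X$ forces $\|a_\nu\| \geq d(0) = 1$, so
$$d(x_\nu)^2 = \|x_\nu\|^2 - 2\langle x_\nu, a_\nu\rangle + \|a_\nu\|^2 \geq \|x_\nu\|^2 - 2\langle x_\nu, a_\nu\rangle + 1,$$
which yields $\liminf_\nu (d(x_\nu)^2 - 1)/\varepsilon_\nu \geq -2\langle v, a\rangle$. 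Combining the two bounds shows $\langle v, a\rangle \geq \langle v, c\rangle$ for every $c \in m(0)$, i.e. $a$ maximises $\langle v, \cdot\rangle$ on $m(0)$; repeating the argument with $b_\nu$ in place of $a_\nu$ gives the same for $b$. Because $m(0) \subset \mathbb{S}(0,1)$, the identity $\|v - c\|^2 = \|v\|^2 - 2\langle v, c\rangle + 1$ turns maximising $\langle v, \cdot\rangle$ into minimising $\|v - \cdot\|$, so $a$ and $b$ are two distinct closest points of $m(0)$ to $v$, placing $v$ in $M_{m(0)}$.

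The main subtlety I expect is the rate $\|x_\nu\|^2/\varepsilon_\nu \to 0$, on which the squeeze hinges; it is automatic once one notes $\|x_\nu\|/\varepsilon_\nu \to \|v\|$ together with $\|x_\nu\| \to 0$. The diameter hypothesis plays its essential role only at the compactness step, where it guarantees that the two approximating sequences of closest points do not coalesce in the limit; without it the same reasoning singles out just one maximiser of $\langle v, \cdot\rangle$, which is precisely the gap illustrated by Example~\ref{m(0) nierowne stozkowi}.
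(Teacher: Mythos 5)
Your argument is correct, and it takes a genuinely different route from the paper's. The paper works with convergent closest-point sequences and passes to the limit the spherical balls $B_\nu = \mathbb{B}(a_\nu, d(a_\nu))\cap\mathbb{S}(0,d(0))$: the limiting spherical ball is centred at (the radial projection of) $v$, touches $x$ on its boundary, and its interior avoids $m(0)$, which identifies $x$ as a nearest point of $m(0)$ to $v$; the diameter hypothesis then forces the accumulation set of $m(a_\nu)$ to have diameter at least $r$, so $v$ has at least two nearest points in $m(0)$. You instead run an analytic squeeze on the quantity $(d(x_\nu)^2-1)/\varepsilon_\nu$, using $\|x_\nu\|^2/\varepsilon_\nu\to 0$ together with $d(x_\nu)^2\leq\|x_\nu-c\|^2$ (for arbitrary $c\in m(0)$) and $d(x_\nu)^2=\|x_\nu-a_\nu\|^2\geq \|x_\nu\|^2-2\langle x_\nu,a_\nu\rangle+1$, concluding that any subsequential limit $a$ of $a_\nu\in m(x_\nu)$ maximises $\langle v,\cdot\rangle$ on $m(0)$, which on the unit sphere is the same as minimising $\|v-\cdot\|$. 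Both approaches establish the same key fact — that limits of $m(x_\nu)$ land in $m_{m(0)}(v)$ — but yours replaces the geometric limit-of-balls picture with an explicit first-order expansion of the squared distance. This makes the proof more readily checkable step by step and sidesteps the mild notational care the paper's argument requires in distinguishing $v$ from its radial projection onto $\mathbb{S}(0,d(0))$ and in matching the spherical and Euclidean nearest-point notions; the paper's version, in exchange, is shorter and more visually transparent about why the diameter hypothesis is exactly what prevents the limiting closest-point set from collapsing. One small omission in your write-up: you invoke Theorem~\ref{Stozek} for the forward inclusion but do not remark that it applies because $0\in M_X\subset\overline{M_X}$; this is trivial but worth a word.
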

\begin{proof}
Theorem \ref{Stozek} gives us one of the inclusions in question. To prove the other one, start by taking $v\in C_0M_X$. By the definition we can find sequences $\lbrace a_\nu\rbrace$ in $M_X$ and $\lbrace \lambda_\nu\rbrace$ in $\Rz_+$ such that
$$a_\nu\rightarrow 0,\, \lambda_\nu\rightarrow 0,\text{ and } a_\nu/\lambda_\nu \rightarrow v.$$

Take any convergent sequence of elements $m(a_\nu)\ni x_\nu\rightarrow x\in m(0)$, we will show that $x\in m_{m(0)}(v)$. Since the additional assumption on the diameter of $m(a_\nu)$ ensures $\text{diam } \limsup m(a_\nu)\geq r>0$ this will give $v\in M_{m(0)}$.

Consider $\mathbb{B}(a_\nu,d(a_\nu))\cap \mathbb{S}(0,d(0))$. For $\|a_\nu\|<d(0)$ it is a closed ball $B_\nu$ in $\mathbb{S}(0,d(0))$ centered at $a_\nu/\|a_\nu\|$. Moreover, $\mathbb{B}(a_\nu,d(a_\nu))\cap X= m(a_\nu)$ also ensures $B_\nu\cap m(0)=m(a_\nu) \cap m(0)$. Since $x_\nu\rightarrow x$, the sequence of balls $B_\nu$ converges to a certain closed ball $B$ centered at $v$ with $x$ on its boundary. Of course, the interior of $B$ has an empty intersection with $m(0)$, as for every $\varepsilon>0,\, \mathbb{B}(a_\nu, d(a_\nu) - \varepsilon) \cap X=\emptyset$,  which proves that $x$ is the closest point to $v$ in $m(0)$.

\end{proof}

Theorem \ref{Stozek} yields two immediate yet interesting corollaries.

\begin{cor}
\label{dim}
In the considered situation, $\dim_a M_X\geq \dim M_{m(a)}.$
\begin{proof}
From Theorem \ref{Stozek} the medial axis $M_{m(a)}$ is a subset of the tangent cone $C_a M_X$, thus its dimension is bounded by $\dim C_a M_X$. Since $M_X$ is definable, $\dim M_X$ is always greater than or equal to $\dim C_a M_X$ and the assertion follows.
\end{proof}
\end{cor}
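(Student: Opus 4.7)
The plan is to derive the inequality by combining Theorem \ref{Stozek} with the standard dimension bound for tangent cones of definable sets, with the chain of inequalities
\[
\dim M_{m(a)} \;\leq\; \dim C_a M_X \;\leq\; \dim_a M_X.
\]

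First, I would translate so that $a = 0$ without loss of generality. If $0 \notin \overline{M_X}$ then $\dim_a M_X = -\infty$ (or, equivalently, the local dimension is undefined and the inequality is vacuous), and if $0 \in \overline{M_X} \setminus M_X$ then $M_{m(0)} = \emptyset$ by Theorem \ref{Stozek}'s preliminary remark, so the inequality is also trivial. In the nontrivial case $0 \in M_X$, Theorem \ref{Stozek} directly furnishes the inclusion $M_{m(0)} \subset C_0 M_X$, and monotonicity of dimension under inclusion yields the first inequality.

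The second inequality $\dim C_a M_X \leq \dim_a M_X$ is where I would appeal to o-minimality: for any definable set $E \subset \mathbb{R}^n$ and any $a \in \overline{E}$, the tangent cone $C_a E$ is itself definable and its dimension is bounded above by the local dimension $\dim_a E$. This is a classical fact (it follows, for example, from the curve selection lemma together with the definable cell decomposition, or from the fact that $C_a E$ arises as a Kuratowski limit of suitably rescaled pieces of $E$). Applied to $E = M_X$ it gives the required bound.

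The only step that carries substance is the first one, and that is already taken care of by the preceding theorem; the second step is essentially a citation. There is no serious obstacle, and the corollary follows by concatenation of the two inequalities above.
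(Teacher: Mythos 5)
Your proposal is correct and follows essentially the same route as the paper: the inclusion $M_{m(a)} \subset C_a M_X$ from Theorem~\ref{Stozek} combined with the o-minimal fact that the tangent cone of a definable set has dimension at most the (local) dimension of the set. The extra case distinctions you make for $a \notin M_X$ are harmless and are implicitly covered by the paper's argument.
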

The following result was known before (e.g. \cite{Cannarsa} Theorem~6.2 or \cite{DenkowskiOnPoints} Theorem~4.10  for the subset of points in the medial axis with $\dim m(a)=n-1$), Theorem~\ref{Stozek} yields a proof more natural in the medial axis category.
\begin{cor}\label{Full Sphere}
Point $a\in M_X$ is isolated in $M_X$ if and only if $m(a)$ is a full sphere.
\begin{proof}
Sufficiency of the condition is obvious.
For the proof of necessity, suppose that $m(a)$ is not a full sphere. It is easy to observe (for example, using the compactness of the sphere, the continuity of the distance function, and Theorem~\ref{Szkielet podzbioru sfery}), that its medial axis is a cone of dimension $\dim M_{m(a)}>0$. From Theorem~\ref{Stozek} we derive that $\dim C_aM_X>0$, hence $a$ cannot be isolated.
\end{proof}
\end{cor}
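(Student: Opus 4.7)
\emph{Plan.} I would split the proof into the two directions. The forward direction is an elementary calculation; for the reverse direction, the plan is to use Proposition \ref{Szkielet podzbioru sfery} to exhibit $M_{m(a)}$ as having positive dimension, then apply Corollary \ref{dim} to transfer this to $M_X$.

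For sufficiency, suppose $m(a)=\mathbb{S}(a,r)$ with $r=d(a,X)$. Then $\mathbb{S}(a,r)\subset X$ and, by the definition of $r$, $X\cap\mathbb{B}(a,r)=\mathbb{S}(a,r)$. For any $a'$ with $0<\|a'-a\|<r$, the radial projection of $a'$ onto $\mathbb{S}(a,r)$ is a candidate at distance exactly $r-\|a'-a\|$, while the reverse triangle inequality forces every $x\in X\setminus\mathbb{S}(a,r)$, which has $\|x-a\|>r$, to satisfy $\|x-a'\|\geq\|x-a\|-\|a'-a\|>r-\|a'-a\|$. Consequently $m(a')$ reduces to that single projection, and $a$ is isolated in $M_X$.

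For necessity, assume $a\in M_X$ but $m(a)\neq\mathbb{S}(a,d(a))$. After translating $a$ to the origin and rescaling by $1/d(a)$, Proposition \ref{Szkielet podzbioru sfery} applies and identifies $M_{m(a)}$ with the Euclidean cone over the intrinsic sphere medial axis $M^{\mathbb{S}}_{m(a)}$. Since $a\in M_X$, $m(a)$ contains at least two points, and since $m(a)\neq\mathbb{S}$, the function $d^{\mathbb{S}}_{m(a)}$ is strictly positive somewhere on the compact sphere and thus attains a strictly positive maximum at some $q^*\in\mathbb{S}$. I would then check that $q^*\in M^{\mathbb{S}}_{m(a)}$: if $q^*$ had a unique spherically-closest point $p\in m(a)$, the maximality of the smooth function $q\mapsto d_{\mathbb{S}}(q,p)$ at $q^*$ forces $q^*=-p$, but then every other point $p'\in m(a)$ satisfies $d_{\mathbb{S}}(q^*,p')<\pi=d_{\mathbb{S}}(q^*,p)$, contradicting uniqueness. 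Hence $M^{\mathbb{S}}_{m(a)}\neq\emptyset$, the cone $M_{m(a)}$ has dimension at least $1$, and Corollary \ref{dim} gives $\dim_a M_X\geq 1$, forbidding $a$ from being isolated.

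The only subtle point is the nonemptiness of $M^{\mathbb{S}}_{m(a)}$, and even that amounts to the short farthest-point argument sketched above; everything else is a direct application of the already-established Proposition \ref{Szkielet podzbioru sfery} and Corollary \ref{dim}.
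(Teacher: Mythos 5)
Your proof is correct and follows essentially the same route as the paper's: you spell out, via a farthest-point argument, the observation the paper leaves implicit (that $M^{\mathbb{S}}_{m(a)}\neq\emptyset$ whenever $m(a)\subsetneq\mathbb{S}$, so that the cone $M_{m(a)}$ has dimension at least $1$), and then pass to $\dim_a M_X\geq 1$ via Corollary~\ref{dim}, which is itself an immediate consequence of the Theorem~\ref{Stozek} that the paper invokes directly. One small imprecision worth tidying: the phrase ``the maximality of the smooth function $q\mapsto d_{\mathbb{S}}(q,p)$ at $q^*$'' is not literally given, since $q^*$ maximises $d^{\mathbb{S}}_{m(a)}$ rather than $d_{\mathbb{S}}(\cdot,p)$; the intended conclusion $q^*=-p$ is nonetheless correct, but requires a short uniform gradient estimate for $d_{\mathbb{S}}(\cdot,p')$ over $p'\in m(a)$ near $p$ (using upper semi-continuity of the spherical nearest-point map) rather than maximality of the single function $d_{\mathbb{S}}(\cdot,p)$.
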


In other words, every eyelet in $m(a)$ enables an escape of $M_X$ in its general direction.

On the plane the situation is, as usual, simpler

\begin{thm}\label{plane_case}
For a closed definable $X\subset\mathbb{R}^2$ there is always $$C_aM_X=M_{m(a)}-a.$$
\end{thm}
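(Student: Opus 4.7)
The inclusion $M_{m(a)} - a \subset C_a M_X$ is immediate from Theorem \ref{Stozek}, so the content of the theorem is the opposite inclusion $C_a M_X \subset M_{m(a)} - a$. After translating, I take $a = 0$ and normalise $d(0) = 1$.

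My plan is to push the graph-of-distance strategy from the proof of Theorem \ref{Stozek} from inclusion to equality in the planar case. Writing $\Gamma$ for the graph of $d_X$ in $\mathbb{R}^3$, that proof records, inside the truncating cone $\{y \le \alpha\|x\|\}$ for any $\alpha < -1$, the identifications $C_{(0,1)}M_\Gamma \cap \{y\le\alpha\|x\|\} = C_0 M_X \times \mathbb{R} \cap \{y\le\alpha\|x\|\}$ and $M_{C_{(0,1)}\Gamma} \cap \{y\le\alpha\|x\|\} = M_{m(0)} \times \mathbb{R} \cap \{y\le\alpha\|x\|\}$, and then feeds the Birbrair--Denkowski inclusion $M_{C_{(0,1)}\Gamma} \subset C_{(0,1)} M_\Gamma$ into these relations to obtain $M_{m(0)} \subset C_0 M_X$. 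To conclude the theorem it suffices to establish the reverse inclusion $C_{(0,1)} M_\Gamma \subset M_{C_{(0,1)}\Gamma}$ inside the same truncating cone. In dimension two this becomes a hands-on computation: by the polarisation reformulation following Theorem \ref{Mises}, $C_{(0,1)}\Gamma$ is globally the graph of the piecewise-quadratic function $v \mapsto \tfrac{1}{2}\bigl(d(v, m(0))^2 - \|v\|^2 - 1\bigr)$ on $\mathbb{R}^2$, so the medial axis of its epigraph is amenable to direct description and comparison with $C_{(0,1)} M_\Gamma$.

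An alternative, geometrically more transparent route goes via curve selection. A direction $v \in C_0 M_X \setminus \{0\}$ is realised by a definable arc $\gamma(t) \in M_X$ with $\gamma(t)/\|\gamma(t)\| \to v/\|v\|$, along which definability furnishes continuous sections $x(t),y(t) \in m(\gamma(t))$ with $x(t)\ne y(t)$, converging by upper semicontinuity to limits $x_0,y_0 \in m(0)$. When $x_0 \ne y_0$, the equidistance $\|\gamma(t)-x(t)\| = \|\gamma(t)-y(t)\|$ passes to the statement that $v$ is equidistant from $x_0$ and $y_0$, and Theorem \ref{Mises} via the polarisation identity certifies that no other point of $m(0)$ is closer, so $v \in M_{m(0)}$.

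The case $x_0 = y_0 = z$ is the planar incarnation of the extra-branch phenomenon of Example \ref{m(0) nierowne stozkowi}, and is the step I expect to be the main obstacle. Here the perpendicular bisectors of $[x(t),y(t)]$ all pass through $\gamma(t)$ and thus accumulate a line through the origin perpendicular to the common tangent direction of $X$ at $z$, which forces $v$ to be collinear with $z$. Since $0 \in M_X$ there exists a second point $z' \in m(0)$ distinct from $z$; exploiting the planar codimension of $X$ I would argue that $v$ nevertheless lies on the bisector of $z$ and $z'$—otherwise the Birbrair--Denkowski inclusion applied at the graph level would produce a genuine extra ray in $C_0 M_X$ not accounted for at first order, contradicting the rigidity of the explicit formula in the previous paragraph. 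Disposing of this degenerate case is the point where the hypothesis $n=2$ must really be used; the graph route of the preceding paragraph is designed precisely to bypass this technicality.
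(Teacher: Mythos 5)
There is a genuine gap. You are right that the only content is $C_0 M_X \subset M_{m(0)}$, and your Route~2 begins in the spirit of Corollary~\ref{rownosc}: a definable arc $\gamma$ tangent to $v$, definable sections $x(t)\ne y(t)$ in $m(\gamma(t))$, and the limit arcs on $\mathbb{S}(0,d(0))$ to locate $x_0,y_0$ among the nearest points of $m(0)$ to $v$. That part is sound. But the degenerate case $x_0=y_0=z$, which you yourself flag as ``the step I expect to be the main obstacle'', is not resolved by what you write: the appeal to ``the rigidity of the explicit formula'' and to the Birbrair--Denkowski inclusion at the graph level is circular, because that inclusion only gives $M_{C_{(0,1)}\Gamma}\subset C_{(0,1)}M_\Gamma$, and the reverse containment is exactly what you would need. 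The degenerate case \emph{can} occur in the plane (the paper points out that the obstruction to equality is precisely sequences $x_\nu\in M_X$ with $m(x_\nu)$ shrinking to a singleton), so it cannot be waved away. A correct disposal would have to argue, for instance, that the perpendicular bisector of $[x(t),y(t)]$ passes through both $\gamma(t)\to 0$ and the midpoint $\to z$, hence converges to the line $\mathbb{R}z$, forcing $v=\pm z$, and then rule out both signs using $(0,z]\cap M_X=\emptyset$ and the ball argument of Corollary~\ref{rownosc} together with $\#\,m(0)\ge 2$; none of this is in your write-up.

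Route~1 is not an argument at all as stated. You write that $C_{(0,1)}\Gamma$ is globally the graph of $x\mapsto \tfrac12\bigl(d(x,m(0))^2-\|x\|^2-1\bigr)$ and that the medial axis of its epigraph is ``amenable to direct description'' — but the quantity you would have to compute is $C_{(0,1)}M_\Gamma$, which depends on $M_\Gamma$ and hence on $\Gamma$ to higher than first order; it is \emph{not} a function of $C_{(0,1)}\Gamma$ alone. Example~\ref{m(0) nierowne stozkowi} shows that the containment $C_{(0,1)}M_\Gamma\subset M_{C_{(0,1)}\Gamma}$ genuinely fails in $\mathbb{R}^3$, so any ``hands-on computation'' must use $n=2$ in an essential way that you do not indicate. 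For comparison, the paper's proof avoids both of these issues by a purely topological argument: it takes a hypothetical $v\in C_0M_X\setminus M_{m(0)}$, a curve $\gamma\subset M_X$ tangent to $v$, a direction $\nu\in M_{m(0)}$ in the same component of $\mathbb{R}^2\setminus(\mathbb{R}_+\cdot m(0))$ with its curve $\psi\subset M_X$, and shows that for $w\in(v,\nu)$ the segments $[tw,w_t]$ with $w_t\in m(tw)$ are eventually trapped so as to cross $\gamma$ or $\psi$, contradicting the fact that the open segment from a point to its nearest point never meets $M_X$. No case split on whether $m(\gamma(t))$ collapses is needed.
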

\begin{proof}
Assume $a=0$. 
Of course, in view of Theorem~\ref{Stozek} the only strict inclusion possible is $M_{m(0)}\subsetneq C_0M_X$. Take then $v\in C_0M_X \backslash M_{m(0)}$ if one exists. Since $M_X$ is definable, by the Curve Selection Lemma there exists a continuous $\gamma: [0,1]\to M_X$ tangential to $v$ with $\gamma(0)=0$ and  with $0\notin \gamma((0,1])$. Moreover, as for every $x\in m(0)$ a segment $(0,x]$ does not intersect $M_X$, the image of $\gamma$ and the set $(0,1]\cdot m(0)$ must be disjoint. Take now $\nu\in M_{m(0)}\subset C_0M_X$ lying in the same connected component of $\mathbb{R}^2\backslash (\mathbb{R}_+\cdot m(0))$ as $v$ and denote by $\psi$ a curve in $M_X$ tangential to $\nu$. For any vector $w\in \mathbb{R}^2$ there is $\lim_{t\to 0^+}tw = 0$ and $\limsup_{t\to 0^+} m(tw)\subset m(0)$. Therefore, for any $w$ in a segment $(v,\nu)$ with an arbitrary choice of $w_t\in m(tw)$, starting from a certain $T>0$, a segment $[tw,w_t]$ has to intersect eventually either $\gamma$ or $\psi$, what ends in a contradiction.
\end{proof}

\begin{figure} 
    
        \includegraphics[width=0.5\textwidth]{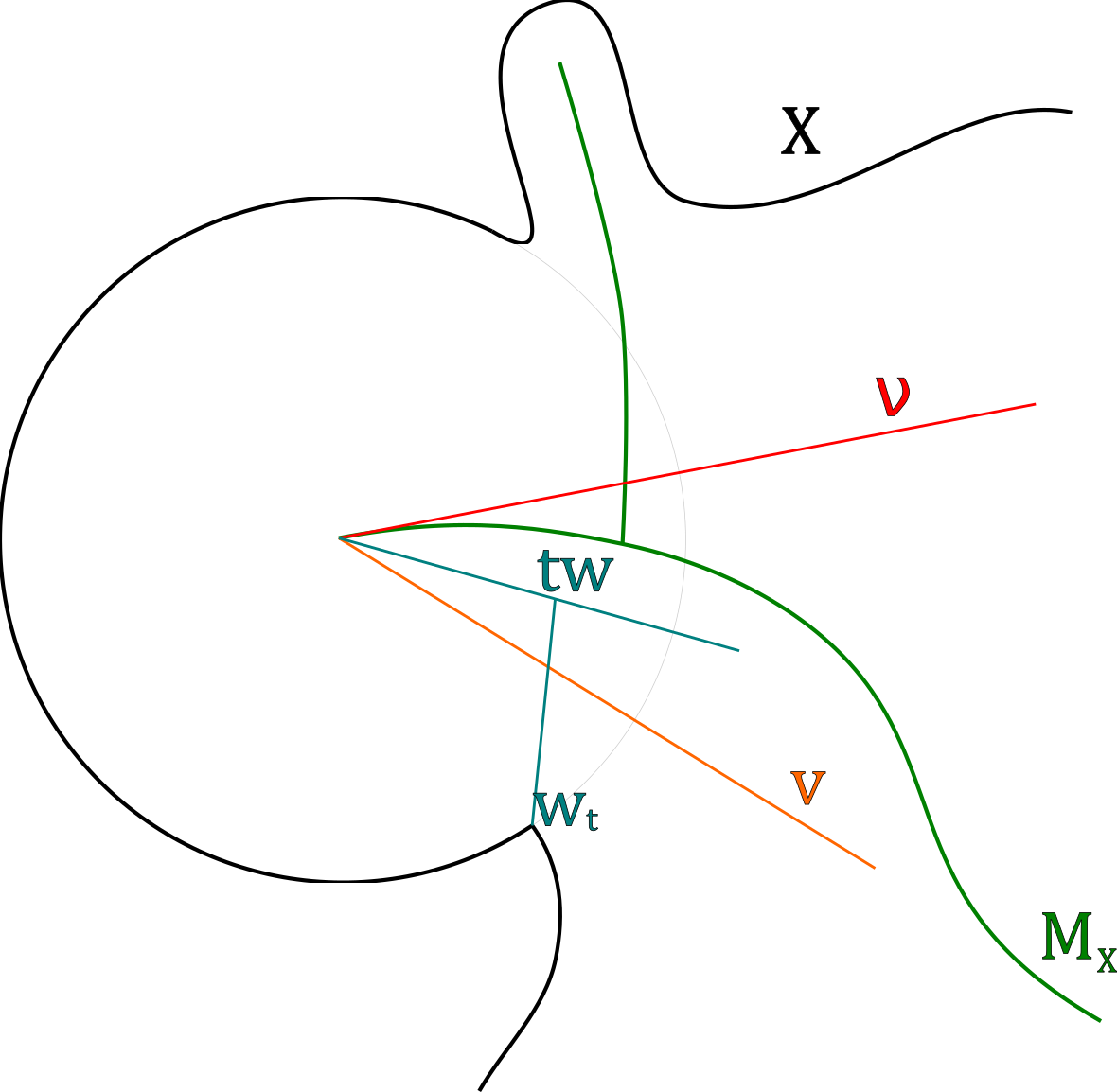}

    \caption{Theorem \ref{plane_case}. Starting from a certain $T>0$, a segment $[tw,w_t]$ intersects either $\gamma$ or $\psi$. }
\end{figure} 

\section{The dimension of the medial axis}

Our aim now is to use the established relation between the tangent cone of the medial axis at point $x\in M_X$ and the medial axis of $m(x)$, to describe the dimension of the medial axis in a more refined way than Corollary \ref{dim}. This result can be viewed as an answer to the hypothesis posted in \cite{DenkowskiOnPoints} or \cite{Erdos}.

Recall again that the multifunction $m(x)$ is definable if X is a definable set. By the definition it means that the graph $$\Gamma_m:=\lbrace (x,y)\in\Rz^n\times\Rz^n|y\in m(x)\rbrace$$ is definable and thus there exists a cylindrical definable cell decomposition ($cdcd$, cf. \cite{Coste})  $\lbrace D_1,\ldots, D_\alpha\rbrace$ of $\Rz^n\times\Rz^n$ adapted to $\Gamma_m\cap (M_X\times \Rz^n)$. Without much effort functions defining cells in $cdcd$s can be assumed to be of class $\mathcal{C}^k$ for an arbitrary $k\in\mathbb{N}$, although in this paper we will not have the need of such smoothness, thus just continuity of the functions is assumed. From the definition of $cdcd$, the collection of the projections of $D_i$ to the first $n$ coordinates forms a $cdcd$ of $\Rz^n$ adapted to $M_X$. Using this decomposition we will be able to prove further properties of the multifunction $m(x)$ and as a consequence we will obtain an explicit formula for the dimension of $M_X$. 

\begin{dfn}
Let $\mathcal{D}$ be a \textit{cdcd} of $\mathbb{R}^n\times\mathbb{R}^n$ adapted to $\Gamma_m$. Denote by $\mathcal{C}$ a \textit{cdcd} of $\mathbb{R}^n$ obtained by projections of cells in $\mathcal{D}$ onto the first $n$~coordinates. We call $x_0\in M_X$ an \textit{interior point of $M_X$ with respect to $\mathcal{D}$} if
there exists a neighbourhood $U_0$ of $x_0$ such that $U_0\cap C = U_0 \cap M_X$, where $C$ is the
unique cell in $\mathcal{C}$ containing $x_0$.
\end{dfn}

\begin{rem}
Mind that since every cell of a $cdcd$ is of pure dimension, the condition $U_0\cap C=U_0\cap M_X$ implies $\dim_{x_0}M_X = \dim C$.
\end{rem}

\begin{prop}\label{ciaglosc}
Let $\mathcal{D}$ be a $cdcd$ of $\mathbb{R}^n\times\mathbb{R}^n$ adapted to $\Gamma_m$ and $M_X\times \Rz^n$. Assume that $x_0 \in M_X$ is an interior point of $M_X$ w.r.t. $\mathcal{D}$. Then the multifunction $m|_{M_X}$
is continuous at $x_0$.
\end{prop}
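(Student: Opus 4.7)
The plan is to exploit the fact, recorded in the introduction, that $m$ is already upper semi-continuous everywhere. It therefore suffices to establish the \emph{lower} semi-continuity of $m|_{M_X}$ at $x_0$, namely that for every $y_0\in m(x_0)$ and every sequence $M_X\ni x_\nu\to x_0$ one can find $y_\nu\in m(x_\nu)$ with $y_\nu\to y_0$.

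To produce such $y_\nu$, fix $y_0\in m(x_0)$ and let $D$ be the unique cell of $\mathcal{D}$ containing $(x_0,y_0)$. Because $\mathcal{D}$ is adapted to $\Gamma_m$, we have $D\subset\Gamma_m$. By the cylindrical nature of a \textit{cdcd}, the projection of $D$ onto the first $n$ coordinates is a cell of $\mathcal{C}$; since it contains $x_0$ and the cells of $\mathcal{C}$ form a partition, it must coincide with $C$. The interiority hypothesis $U_0\cap C=U_0\cap M_X$ then guarantees that $x_\nu\in C$ eventually, so that the fibres $D_{x_\nu}:=\{y\in\Rz^n:(x_\nu,y)\in D\}$ are non-empty, and the inclusion $D\subset\Gamma_m$ forces $D_{x_\nu}\subset m(x_\nu)$.

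The remaining ingredient, which is also the main obstacle, is the assertion that the fibre multifunction $C\ni x\mapsto D_x$ is Kuratowski continuous. This rests on the cylindrical description of $D$ over $C$: $D_x$ is cut out inside $\Rz^n$ by a finite chain of graphs and open bands whose bounding functions depend continuously on $x$, and an induction on the number of remaining coordinates yields both the upper and lower Kuratowski limit inclusions. Granting this, the containment $y_0\in D_{x_0}\subset\liminf_\nu D_{x_\nu}$ supplies $y_\nu\in D_{x_\nu}\subset m(x_\nu)$ with $y_\nu\to y_0$, and lower semi-continuity at $x_0$ follows.
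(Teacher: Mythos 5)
Your proposal is correct and follows essentially the same route as the paper: reduce to lower semi-continuity (upper being known), pass to the cell $D$ of $\mathcal{D}$ containing $(x_0,y_0)$, use interiority so that nearby points of $M_X$ fall in $C=\pi(D)$ with non-empty fibres $D_x\subset m(x)$, and conclude from the cylindrical graph/band structure of $D$ over $C$. The coordinate-by-coordinate induction that you defer with ``granting this'' is exactly the one the paper carries out via the partial projections $C_k=(\mathrm{id}_{\Rz^n}\times\pi_k)(D)$ and shrinking neighbourhoods, so filling it in along those lines completes your argument.
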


\begin{proof}
It is known \cite{DenkowskiOnPoints}, \cite{BirbrairDenkowski} that the multifunction $m(x)$ is upper se\-mi-con\-tin\-uous along $M_X$: for any $x_0\in \Rz^n$ we have $\limsup_{M_X\ni x\rightarrow x_0}m(x)\subset m(x_0)$. To prove the continuity of $m(x)$, we need to show that in the considered case $m(x_0)$ is a subset of the lower Kuratowski limit as well. Explicitly, we need to show that
$$\forall y\in m(x_0),\ \forall U\ni y,\ \exists V\ni x_0\colon \forall x\in V\cap M_X,\ m(x)\cap U\neq \emptyset,$$
where $U,V$ are open sets.

Take $y=(y_1,\ldots,y_n)\in m(x_0)$ and a neighbourhood $U_1\times\ldots\times U_n$ of $y$. Denote by $D$ the cell in $\mathcal{D}$ containing $(x_0,y)$ and by $C$ its projection on $\mathbb{R}^n$. We will show the 
continuity of $\pi_i\circ m(x)$ where $\pi_i$ is a natural projection on the first $i$ coordinates. Of course, $\pi_n\circ m(x)=m(x)$ and the assertion will follow.

For the first coordinate, observe that $C_1:=(id_{\mathbb{R}^n}\times\pi_1) (D)$ is either a graph of a continuous function or a band between two such functions defined over $C$. In the first case, we can easily find a neighbourhood $V$ of $x_0$ in $M_X$ such that $\pi_1\circ m(V)\subset U_1$ which implies $\pi_1\circ m(V)\cap U_1\neq \emptyset$. In the case $C_1$ is a band between two functions $f_-,f_+$ there must be $f_-(x)<y_1<f_+(x)$. By continuity these inequalities must hold in a certain neighbourhood $V$ of $x_0$, resulting in $$\sup_{x\in V}f_-(x)\leq y_1\leq \inf_{x\in V}f_+(x).$$ Clearly $\pi_1\circ m(V)\cap U_1\neq \emptyset$.

Assume now the composition $\pi_k \circ m(x)$ to be continuous for $k<n$. Again, the cell $C_{k+1}:=(id_{\mathbb{R}^n}\times\pi_{k+1}) (D)$ is either a graph of a continuous function $f_{k+1}$ or a band between two such functions, this time defined over $C_k$. In the first case, we can find a neighbourhood $V=V_0\times\ldots\times V_{k}$ of $(x,y_1,\ldots,y_k)$ such that $f_{k+1}(V)\subset U_{k+1}$. As $\pi_k\circ m(x)$ is continuous, we can ensure $(x,y'_1,\ldots,y'_k)\in V_0\times V_1\times\ldots\times V_k$ just by shrinking $V_0$, by shrinking it even further we can also ensure $(y'_1,\ldots,y'_k)\in U_1\times\ldots\times U_k$.
By doing that, we obtain $\pi_{k+1}\circ m(V_0)\subset U_1\times\ldots\times U_{k+1}$.
The case of $C_{k+1}$ being a band follows in the same manner. 

As was mentioned earlier, as a consequence we obtain the continuity of $m(x)$ at every point of $C$, and in particular at $x_0$.
\end{proof}

The proof of Theorem~\ref{ciaglosc} gives a soothing correlated result on the continuity of $m(x)$. In an o-minimal setting a set of discontinuities of $m(x)$ is always nowhere dense and is equal to the medial axis (see \cite{Fremlin} for an counter-example outside of o-minimal setting). Luckily enough, the restriction of $m(x)$ to the medial axis exhibits an analogous behaviour. It is still continuous outside of a subset nowhere dense in the induced topology. Consequently, with $\mathbb{R}^n$ being locally compact, we have

\begin{cor}\label{gestosc}
The set $$\lbrace a\in M_X| C_a M_X = M_{m(a)}\rbrace$$ is open and dense in $M_X$.
\end{cor}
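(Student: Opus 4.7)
The plan is to exhibit an open dense subset of $M_X$ on which Corollary~\ref{rownosc} forces the equality $C_aM_X=M_{m(a)}$. The three ingredients will be Proposition~\ref{ciaglosc}, the frontier structure of cylindrical definable cell decompositions, and the lower semicontinuity of the diameter along a Kuratowski lower limit.

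First I would fix a $cdcd$ $\mathcal{D}$ of $\mathbb{R}^n\times\mathbb{R}^n$ adapted to $\Gamma_m$ and $M_X\times\mathbb{R}^n$, and denote by $I_{\mathcal{D}}\subset M_X$ the set of its interior points of $M_X$. That $I_{\mathcal{D}}$ is open in $M_X$ is immediate from the defining condition (the same witness neighbourhood works for every nearby point of the same cell). For density, a standard frontier argument: given any nonempty relatively open $V\subset M_X$, pick a cell $C\subset M_X$ of maximal dimension among those meeting $V$; by the frontier condition for $cdcd$s, any other cell of $M_X$ whose closure contains a point of $C$ must strictly exceed $\dim C$, which is impossible by maximality, so every $y\in C\cap V$ has a small neighbourhood in which $M_X$ coincides with $C$, placing $y$ in $I_{\mathcal{D}}$.

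Next, for $a\in I_{\mathcal{D}}$, Proposition~\ref{ciaglosc} yields Kuratowski continuity of $m|_{M_X}$ at $a$. Since $a\in M_X$ forces $\delta:=\text{diam}\, m(a)>0$, I would pick $y_1,y_2\in m(a)$ realising the diameter and use the lower Kuratowski limit to produce, along any sequence $M_X\ni x_\nu\to a$, points $y_i^\nu\in m(x_\nu)$ with $y_i^\nu\to y_i$; this gives $\liminf \text{diam}\, m(x_\nu)\geq \delta$, hence a neighbourhood $U$ of $a$ in $M_X$ on which $\text{diam}\, m>\delta/2$. Corollary~\ref{rownosc} then applies uniformly on $U$ (with $r=\delta/2$), giving $C_xM_X=M_{m(x)}$ for every $x\in U$. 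Thus $a$ is interior to the set in question, which consequently contains the open set $\bigcup_{a\in I_{\mathcal{D}}} U_a$; this union is dense because it contains $I_{\mathcal{D}}$.

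The main obstacle, though modest, is the clean transfer between notions of continuity: extracting from Kuratowski continuity a genuine local lower bound on $\text{diam}\, m$ along $M_X$ (rather than merely pointwise control) is what unlocks Corollary~\ref{rownosc}, and the frontier-based density of $I_{\mathcal{D}}$ must be argued from the $cdcd$ structure rather than from $M_X$ alone.
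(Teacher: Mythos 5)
Your proposal is correct and follows essentially the same route as the paper: take the open dense set $I_{\mathcal D}$ of interior points with respect to a $cdcd$ adapted to $\Gamma_m$, invoke Proposition~\ref{ciaglosc} for Kuratowski continuity of $m|_{M_X}$ there, extract a local lower bound on $\operatorname{diam} m$, and conclude via Corollary~\ref{rownosc}. The paper compresses the diameter step by asserting that $\operatorname{diam} m$ is positive and continuous on a relatively compact neighbourhood contained in the cell, whereas you only use (and carefully justify) lower semicontinuity of the diameter from the lower Kuratowski limit, which is exactly what Corollary~\ref{rownosc} needs; this is a mild but legitimate tightening of the same argument rather than a different one.
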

\begin{proof}
For any $cdcd$ adapted to a definable set $M_X$ points admitting a neighbourhood $U$ such that $U\cap C=U\cap M_X$ form an open and dense subset of $M_X$. For any such point, we can find a relatively compact neighbourhood $V$ included in its cell.  
The theorem now follows Proposition \ref{ciaglosc} and Corollary \ref{rownosc}, as $diam\, m(x)$ is positive and continuous on $V$. 
\end{proof}

The main result of this paper, settling the question about the dimension of the medial axis in the definable case, is the following.

\begin{thm}\label{wymiar}
Let $\mathcal{D}$ be a $cdcd$ of $\mathbb{R}^n\times\mathbb{R}^n$ adapted to $\Gamma_m\cap (M_X\times \Rz^n)$. If $x_0 \in M_X$ is an interior point of $M_X$ w.r.t. $\mathcal{D}$ then an equality occurs $$\dim_{x_0} M_X + \dim m(x_0)=n-1.$$
\end{thm}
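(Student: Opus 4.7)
The strategy combines the tangent cone identity at interior points, the cone structure of spherical medial axes, and a dimensional count in the $cdcd$. Since $x_0$ is interior w.r.t.\ $\mathcal{D}$, Proposition~\ref{ciaglosc} makes $m|_{M_X}$ continuous at $x_0$; on a relatively compact neighbourhood of $x_0$ inside its cell, $\mathrm{diam}\,m(x)$ is continuous and positive, so Corollary~\ref{rownosc} applies and gives $C_{x_0}M_X=M_{m(x_0)}$. As $M_X$ is definable, $\dim_{x_0}M_X=\dim C_{x_0}M_X=\dim M_{m(x_0)}$, and by Proposition~\ref{Szkielet podzbioru sfery} the set $M_{m(x_0)}$ is a cone with vertex $x_0$ over the non-empty spherical medial axis $M^{\mathbb{S}}_{m(x_0)}\subset\mathbb{S}(x_0,d(x_0))$, so
\[
\dim_{x_0}M_X=\dim M_{m(x_0)}=\dim M^{\mathbb{S}}_{m(x_0)}+1.
\]
The theorem thus reduces to the identity $\dim M^{\mathbb{S}}_{m(x_0)}+\dim m(x_0)=n-2$.

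For the dimensional count, let $C$ be the cell of $x_0$ in the projected $cdcd$ $\mathcal{C}$. Every cell $D\in\mathcal{D}$ projecting onto $C$ and contained in $\Gamma_m\cap(M_X\times\mathbb{R}^n)$ satisfies $\dim D=\dim C+\dim D_x$, and since the fibres $D_x$ cover $m(x)$ one obtains
\[
\dim\bigl(\Gamma_m\cap(C\times\mathbb{R}^n)\bigr)=\dim_{x_0}M_X+\dim m(x_0).
\]
I would match this quantity with $n-1$ by parametrising $\Gamma_m$ through the unit normal bundle of $X$ (of pure dimension $n-1$) times $\mathbb{R}_+$ via $(x,y)\mapsto(y,(x-y)/d(x),d(x))$; the medial axis condition $x\in M_X$ then becomes a codimension-one matching condition requiring a second preimage of $x$ under $(y,v,r)\mapsto y+rv$, giving the upper bound $\le n-1$.

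The main obstacle is securing the matching lower bound. I would address it by induction on the ambient dimension $n$ applied to $m(x_0)\subset\mathbb{S}(x_0,d(x_0))$: the interior regularity of $x_0$ w.r.t.\ $\mathcal{D}$ propagates through the tangent cone identity $C_{x_0}M_X=M_{m(x_0)}$ to an interior-type regularity at a suitable point of $M^{\mathbb{S}}_{m(x_0)}$, so that the inductive hypothesis applied inside the $(n-1)$-dimensional sphere delivers $\dim M^{\mathbb{S}}_{m(x_0)}+\dim m(x_0)\ge n-2$. Combined with the upper bound from the previous paragraph, this closes the proof; the interior hypothesis is essential, since as Example~\ref{m(0) nierowne stozkowi} already hints, non-interior points can carry strata of $M_X$ whose tangent directions are not captured by $M_{m(x_0)}$ and thus contribute excess dimension.
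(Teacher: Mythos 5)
Your opening is correct and mirrors the paper exactly: interior-ness w.r.t.\ $\mathcal{D}$ gives continuity of $m|_{M_X}$ at $x_0$ (Proposition~\ref{ciaglosc}), so $\mathrm{diam}\, m$ stays bounded away from zero near $x_0$ and Corollary~\ref{rownosc} yields $C_{x_0}M_X = M_{m(x_0)}$, whence $\dim_{x_0} M_X = \dim M_{m(x_0)}$. After that, however, your proposal diverges from the paper's route and runs into two gaps that it does not close.

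First, the upper bound. You parametrise $\Gamma_m$ through the unit normal bundle and assert that ``requiring a second preimage'' of $x$ under $(y,v,r)\mapsto y+rv$ is a \emph{codimension-one} condition, concluding $\dim_{x_0}M_X + \dim m(x_0)\le n-1$. This codimension claim is precisely the content that needs a proof: nothing in the sketch excludes the second-preimage locus from having codimension zero on some stratum, nor explains why the dimension of $\Gamma_m\cap(C\times\mathbb{R}^n)$ transfers cleanly through the non-proper normal-exponential map. The paper sidesteps this entirely by observing that $U\cap C = U\cap M_X$ forces $\dim m(x)$ to be constant near $x_0$ and then invoking \cite{DenkowskiOnPoints} Theorem~4.13, which already provides $\dim m(x) + \dim M^{\dim m(x)} \le n-1$. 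You would either have to reprove that result via your normal-bundle picture (which you have not done) or cite it, in which case the normal-bundle detour is unnecessary.

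Second, and more seriously, the lower bound. You propose induction on the ambient dimension $n$, passing from $m(x_0)\subset\mathbb{R}^n$ to the sphere $\mathbb{S}(x_0,d(x_0))$ and asserting that ``interior-type regularity propagates'' to a point of $M^{\mathbb{S}}_{m(x_0)}$. But the inductive hypothesis is Theorem~\ref{wymiar} itself, which is stated for definable subsets of Euclidean space with a $cdcd$ of $\mathbb{R}^{n-1}\times\mathbb{R}^{n-1}$; the sphere is not such an ambient space and no isometric chart takes a patch of $\mathbb{S}^{n-1}$ into $\mathbb{R}^{n-1}$, so distances --- and hence medial axes and $m(\cdot)$ --- are distorted under any chart. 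The paper gets around this with a concrete geometric step you are missing: it uses Corollary~\ref{gestosc} to pick a generic $v\in M_{m(x_0)}$ where $C_v M_{m(x_0)} = M_{m_{m(x_0)}(v)}$, notes that $m_{m(x_0)}(v)$ lies in the intersection of two spheres and hence in a genuine affine hyperplane $L\cong\mathbb{R}^{n-1}$, splits $M_{m_{m(x_0)}(v)} = v\mathbb{R}\oplus M^L_{m_{m(x_0)}(v)}$, and then applies the inductive hypothesis \emph{inside} the Euclidean $L$ --- with the induction running on the dimension $\dim C$ of the cell, not on $n$. The base case $\dim C = 0$ is Corollary~\ref{Full Sphere}. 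Without this hyperplane reduction (or an explicit o-minimal theory of medial axes inside spheres), your inductive step is not executable as written.

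In short, the tangent-cone reduction at the start is correct and identical to the paper's, but both the upper bound (hand-waved normal-bundle codimension count) and the lower bound (induction on $n$ via the sphere without a legitimate chart) contain genuine gaps; the paper's proof fills them with \cite{DenkowskiOnPoints} Theorem~4.13 and the affine hyperplane $L$ plus induction on cell dimension, respectively.
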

\begin{proof}
Without loss of generality, we can assume $x_0=0$.
 
We will prove the theorem by induction with respect to the dimension of $C$ - the cell containing $x_0$.
 
The case $\dim C=0$ is already proved in Corollary \ref{Full Sphere}

Assume now that the dimension of $C$ is equal $k$, and the theorem holds for any cell $C'$ of dimension smaller than $k$. 
Since $0$ satisfies the assumptions of Proposition \ref{ciaglosc}, we can think about $C_0 M_X$ as a medial axis of $m(0)$, then clearly $$\dim_0 M_X=\dim C\geq \dim C_0C=\dim M_{m(0)}.$$ 
Corollary~\ref{gestosc} states, that we can find $0\neq v\in M_{m(0)}$ 
for which the tangent cone to $C_0 M_X $ at $v$ depends only on points in $m(0)$ that are lying closest to $v$ (and are collected in a set $m_{m(0)}(v)$). Therefore, for such $v$, after a suitable translation, there is again $C_vM_{m(0)}=M_{m_{m(0)}(v)}$ and $$\dim M_{m(0)}\geq \dim C_v M_{m(0)}=\dim M_{m(0)}.$$ 

The set $m_{m(0)}(v)$ is a subset of both $\mathbb{S}(0,d(0,X))$ and $\mathbb{S}(v,d(v,m(0)))$. Denote by $L$ an unique $n-1$ dimensional affine subspace of $\Rz^n$ containing an intersection of the mentioned spheres. It is immediate that the subspace $L$ can be written as $v^\perp+\alpha v$ for a certain $\alpha\in \Rz$ and that $m_{m(0)}(v)$ is an subset of $L$. Therefore, the medial axis of $m_{m(0)}(v)$ in $\mathbb{R}^n$ is a Minkowski sum of $v\Rz$ and the medial axis of $L\cap m(0)$ computed in $L$ (denoted by $M^L_{m_{m(0)}(v)}$).

\begin{figure} 
    
        \includegraphics[width=0.4\textwidth]{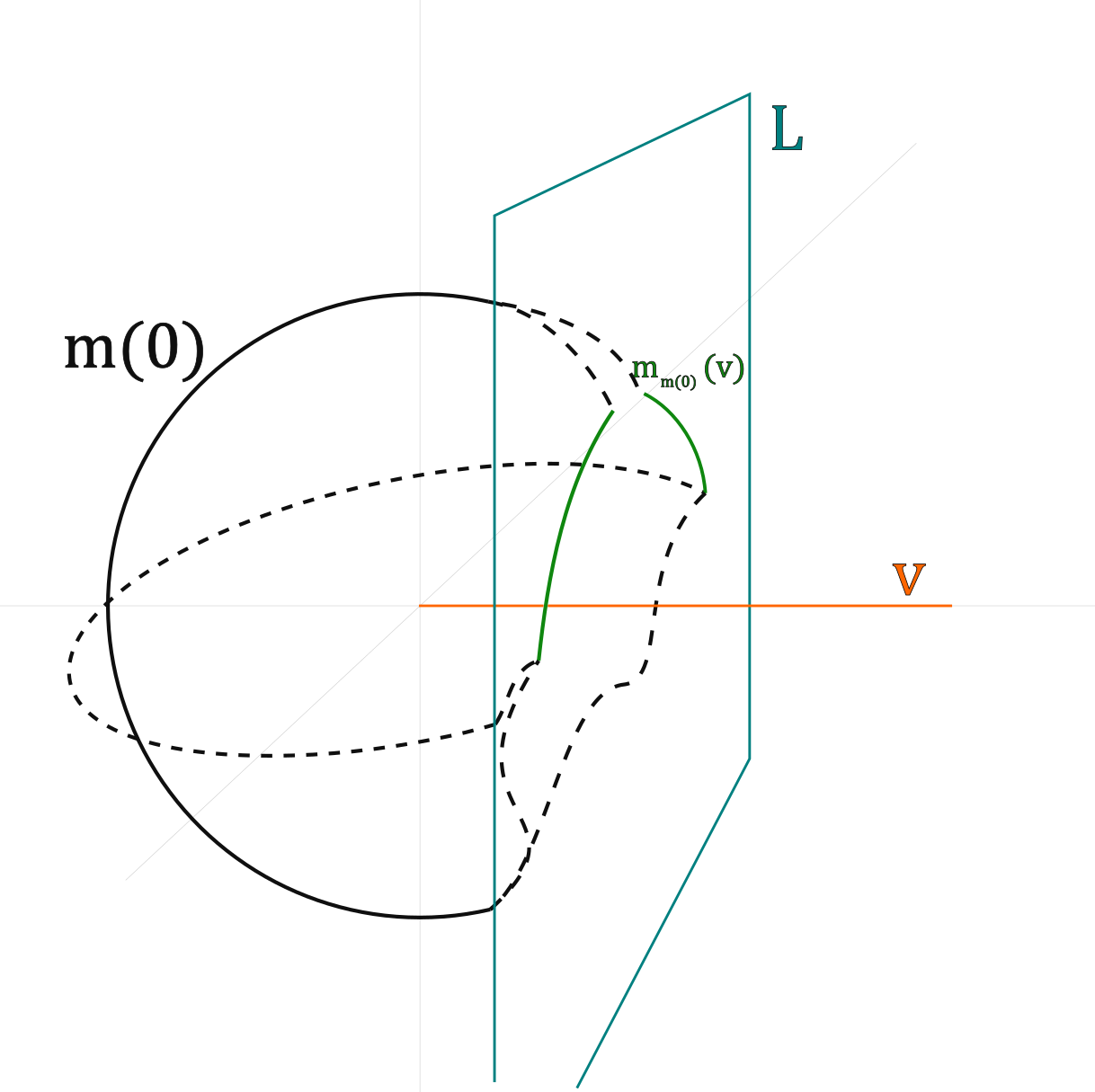}

    \caption{Theorem \ref{wymiar}. The set $m_{m(0)}(v)$ is a subset of an affine space orthognal to $v$. }
\end{figure} 

This means that $\dim M_{m_{m(0)}(v)}=\dim M^L_{m_{m(0)}(v)}+1$ and in particular we have $\dim M^L_{m_{m(0)}(v)} < k$. Due to the last inequality, every cell in the $cdcd$ of $L$ adapted to $M^L_{m_{m(0)}(v)}$ has a dimension bounded by $k-1$ and so, by the induction hypothesis, for a generic $x\in M^L_{m_{m(0)}(v)}$ there is
$$\dim  m_{m(0)\cap L}(x)=\dim L-1 - \dim M^L_{m_{m(0)}(v)}.$$

A set $m_{m(0)\cap L}(x)$ is a subset of $m(0)$, so its dimension cannot exceed $\dim m(0)$, whereas on the other side of the equality we have obtained in fact $n-1-\dim M_{m_{m(0)}(v)}$, which by our choice is greater than or equal $n-1-\dim M_X$.

The opposite inequality is way simpler to prove. It suffices to observe that $U\cap C = U\cap M_X$ is a guarantee that the dimension of $m(x)$ is constant in $U$. Consequently  \cite{DenkowskiOnPoints} Theorem~4.13 ensures that the sum $\dim_0 M_X+ \dim m(0)$ cannot exceed $n-1$. 

Finally, we obtain the desired $\dim_0 M_X+ \dim m(0) = n-1$.

\end{proof}

The formula for a generic point allows to describe the dimension at any point of $M_X$. This strengthening of the results from \cite{Cannarsa},\cite{DenkowskiOnPoints}, and \cite{Erdos2} solves the problem for sets definable in an o-minimal setting. 

\begin{thm}\label{ogolny}
For any point $a\in M_X$ there is $$\dim_a M_X+\min \lbrace k\,|\,a\in \overline{M^k}\rbrace=n-1$$
Where $M^k=\lbrace a\in M_X|\dim m(a)=k\rbrace$.
\end{thm}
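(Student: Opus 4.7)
The plan is to deduce the global formula from the generic one, Theorem~\ref{wymiar}, by analysing $M_X$ cell by cell. Fix a $cdcd$ $\mathcal{D}$ of $\mathbb{R}^n\times\mathbb{R}^n$ adapted to $\Gamma_m\cap(M_X\times\mathbb{R}^n)$, and let $\mathcal{C}$ be its projection onto $\mathbb{R}^n$. As in the proof of Proposition~\ref{ciaglosc}, $\dim m(\cdot)$ is constant on each cell $C$ of $\mathcal{C}$ contained in $M_X$; denote this value by $k_C$, so that each $M^k$ is the union of the cells with $k_C=k$. Writing $\mathcal{N}(a)=\{C\in\mathcal{C}:C\subseteq M_X,\ a\in\overline{C}\}$, standard properties of $cdcd$s yield $\dim_a M_X=\max\{\dim C:C\in\mathcal{N}(a)\}$, and extracting a subsequence lying in a single cell of $\mathcal{C}$ from any sequence witnessing $a\in\overline{M^k}$ gives $\{k:a\in\overline{M^k}\}=\{k_C:C\in\mathcal{N}(a)\}$. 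The formula we want thus becomes $\max_{C\in\mathcal{N}(a)}\dim C+\min_{C\in\mathcal{N}(a)}k_C=n-1$.

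For the upper bound I would pick $C^*\in\mathcal{N}(a)$ with $\dim C^*=\dim_a M_X$. Since no cell of $\mathcal{C}$ in $M_X$ near $a$ has strictly greater dimension, points of $C^*$ close enough to $a$ are interior points of $M_X$ w.r.t.\ $\mathcal{D}$, and Theorem~\ref{wymiar} at such a point forces $\dim C^*+k_{C^*}=n-1$. Hence $\min_{C\in\mathcal{N}(a)}k_C\leq k_{C^*}=n-1-\dim_a M_X$. For the matching lower bound I would verify that $k_C\geq n-1-\dim_a M_X$ for every $C\in\mathcal{N}(a)$. If $C$ itself has an interior point of $M_X$ w.r.t.\ $\mathcal{D}$, Theorem~\ref{wymiar} applied there gives $k_C=n-1-\dim C\geq n-1-\dim_a M_X$, using $\dim C\leq\dim_a M_X$. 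Otherwise $C$ lies entirely in the boundary of a strictly larger cell of $\mathcal{C}$ in $M_X$, and choosing such a $\tilde C$ of maximal dimension with $C\subseteq\overline{\tilde C}$ produces a cell that has interior points and, by the previous case applied to $\tilde C\in\mathcal{N}(a)$, satisfies $k_{\tilde C}\geq n-1-\dim_a M_X$. Upper semi-continuity of $m$ combined with the $cdcd$ fibre structure of $\Gamma_m$ above $\overline{\tilde C}$ then forces $k_C\geq k_{\tilde C}$, completing the chain.

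The crux is that last step, the non-decreasing behaviour of $\dim m$ as one moves from an open cell $\tilde C$ to a boundary cell $C\subseteq\overline{\tilde C}\cap M_X$. Plain upper semi-continuity of $m$ as a multifunction is not enough; what is needed is the o-minimal description of those cells of $\mathcal{D}$ which realise the fibre dimension $k_{\tilde C}$ over $\tilde C$, together with the closedness of $\Gamma_m$ (a consequence of upper semi-continuity of $m$ with closed values), forcing the $k_{\tilde C}$-dimensional part of the fibres to persist in some cell of $\mathcal{D}$ above $\{a\}$. Once this semi-continuity is in place, combining the two estimates yields $\dim_a M_X+\min\{k:a\in\overline{M^k}\}=n-1$.
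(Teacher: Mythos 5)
Your cell-by-cell strategy is a genuinely different route from the paper's. The paper argues by induction on the local dimension $\alpha=\dim_a M_X$: the base case comes from Corollary~\ref{Full Sphere}, one implication follows from the estimate $\dim m(x)+\dim M^{\dim m(x)}\leq n-1$ cited from the literature, and the converse implication is proved by contradiction, using Corollary~\ref{gestosc} to find a generic $v\in M_{m(a)}$ with $\dim_v M_{m(a)}+\dim m_{m(a)}(v)=n-1$ and then bounding $\dim_v M_{m(a)}\leq \dim_a M_X$ via the tangent-cone inclusion $M_{m(a)}\subseteq C_a M_X$ from Theorem~\ref{Stozek}. You instead try to read the formula directly off the stratification. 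The first half of your plan is sound (modulo assuming the $cdcd$ satisfies the frontier condition, which should be made explicit): the identification $\{k:a\in\overline{M^k}\}=\{k_C:C\in\mathcal{N}(a)\}$, the local-dimension computation, and the application of Theorem~\ref{wymiar} to a maximal-dimensional cell $C^*$ all go through, giving the inequality $\min\{k:a\in\overline{M^k}\}\leq n-1-\dim_a M_X$.

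The gap is exactly where you flag it, and it is not merely a detail to be filled in. The inequality $k_C\geq k_{\tilde C}$ for a frontier cell $C\subseteq\overline{\tilde C}$ does not follow from upper semi-continuity of $m$ together with closedness of $\Gamma_m$. A closed definable family can perfectly well have fibre dimension \emph{drop} at the frontier: the graph $\{(x,y):0\leq y\leq x\leq 1\}$ is closed and the multifunction $x\mapsto[0,x]$ is upper semi-continuous with closed values, yet the one-dimensional fibres over $x>0$ collapse to a point over $x=0$. So ``the $k_{\tilde C}$-dimensional part of the fibres persists'' is precisely the medial-axis-specific content one must prove; abstract $cdcd$ bookkeeping and closedness cannot supply it. Note moreover that $k_C\geq k_{\tilde C}$ is logically equivalent (given your reductions) to the statement of the theorem, so establishing it this way risks circularity unless you invoke something genuinely geometric. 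That geometric input is what the paper extracts from Theorem~\ref{Stozek}: applying the dimension formula inside the auxiliary object $m(a)\subset\mathbb{S}(a,d(a))$ and transporting the conclusion back through $M_{m(a)}\subseteq C_a M_X$. Without some substitute for this step, the proposal does not close.
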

\begin{proof}

We will prove that 
$$\min \lbrace k|a\in \overline{M^k}\rbrace = n-1-\alpha \iff \dim_a M_X=\alpha$$
holds true for any $\alpha\in\mathbb{N}$.

For $\alpha =0$, one of the implications is exactly what we know from Corollary $\ref{Full Sphere}$. The opposite one is given by \cite{DenkowskiOnPoints} Theorem~4.10. 

Now assume the claim to be true whenever $\alpha<\alpha_0$. Mind that \cite{DenkowskiOnPoints} Theorem~4.13 states that for any $x\in M_X$ 
$$\dim m(x)+ \dim M^{\dim m(x)}\leq n-1,$$ thus it is easy to observe that
$$\min \lbrace k|a\in \overline{M^k}\rbrace = n-1-\alpha_0 \Rightarrow \dim_a M_X=\alpha_0.$$

It remains to prove the opposite implication. Take any $a\in M_X$ with $\dim_a M_X=\alpha_0$.  
The claim for $\alpha<\alpha_0$ allows us to analyse just points in the vicinity of $a$ where the local dimension of $M_X$ is equal to $\alpha_0$. Furthermore, it is only needed to be showed $\dim m(a)\geq n-1-\alpha_0$.

Assume otherwise: $\dim m(a)<n-1-\alpha_0$. Surely the dimension $\dim m_{m(a)}(v)$ is smaller than $n-1-\alpha_0$ for any $v\in M_{m(a)}$ as well. Moreover, thanks to Corollary \ref{gestosc} we can find a point $v\in M_{m(a)}$ with $\dim_v M_{m(a)}+\dim m_{m(a)}(v)=n-1$. Now $$\alpha_0<n-1-\dim m_{m(a)}(v)=\dim_v M_{m(a)}\leq \dim_a M_X=\alpha_0$$
gives the contradiction sought for.

\end{proof}


Let us remark that in order to describe the dimension of the medial axis $M_X$ at a given point $a_0\in M_X$, it is indeed necessary to find the minimum of the dimensions of $m(a)$ for $a$ in a sufficiently small neighbourhood $U$ of $a_0$. 
  
\begin{ex}[Wristwatch]\label{Wristwatch}
Let $X\subset\Rz^2$ be the boundary of a closed set $\mathbb{B}(0,2)\cup((-1,1)\times\Rz)$. Then 
\begin{multline*}
 \dim_0 M_X  +\dim m(0) =\\ 
 =\dim (\lbrace 0\rbrace\times \Rz) +\dim \lbrace x^2+y^2=2,|x|\geq1\rbrace=2.
\end{multline*}
\end{ex}
\begin{figure} 
    
        \includegraphics[width=0.28\textwidth]{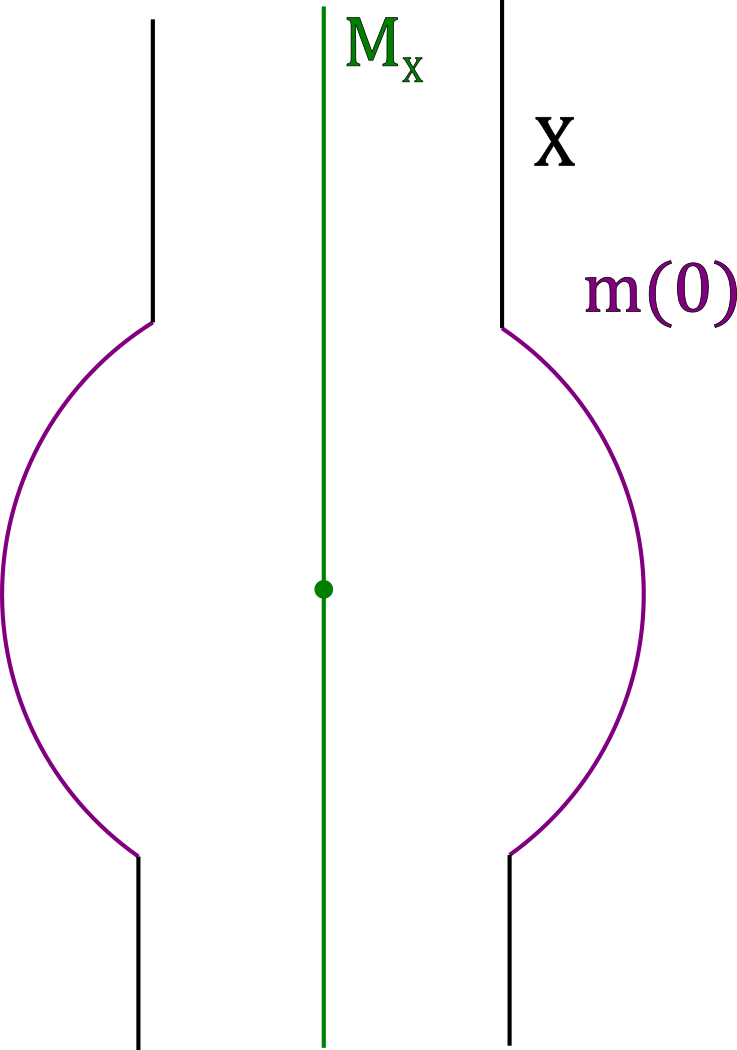}

    \caption{Example \ref{Wristwatch}.  }
\end{figure} 
Finally, we give a global formula for the dimension of a medial axis.
\begin{cor} 
For a closed definable set $X\subset \Rz^n$
$$\dim M_X=n-1-\min_{a\in M_X} \dim m(a).$$
\end{cor}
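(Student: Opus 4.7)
The plan is to deduce this from the pointwise formula in Theorem~\ref{ogolny}, using the fact that in the definable (hence well-behaved) setting the global dimension of $M_X$ is the maximum of its local dimensions, that is, $\dim M_X = \max_{a\in M_X}\dim_a M_X$.

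Set $k_0 := \min_{a\in M_X}\dim m(a)$ and observe that this minimum is attained (the dimensions $\dim m(a)$ form a nonempty subset of $\{0,1,\ldots,n-1\}$), so the stratum $M^{k_0}$ is nonempty, while $M^k=\emptyset$ for every $k<k_0$. Applying Theorem~\ref{ogolny} at any point $a_0\in M^{k_0}$ gives $\min\{k\mid a_0\in\overline{M^k}\}=k_0$ (since $a_0$ itself lies in $M^{k_0}$ and cannot be in the closure of empty strata), hence
$$\dim_{a_0} M_X = n-1-k_0.$$
This already yields the inequality $\dim M_X \ge n-1-k_0$.

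For the reverse inequality, take an arbitrary $a\in M_X$. Since $M^k=\emptyset$ whenever $k<k_0$, we certainly have $\min\{k\mid a\in\overline{M^k}\}\ge k_0$, and Theorem~\ref{ogolny} forces
$$\dim_a M_X = n-1-\min\{k\mid a\in\overline{M^k}\}\le n-1-k_0.$$
Maximizing over $a$ gives $\dim M_X\le n-1-k_0$, and combining the two bounds yields the required equality.

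There is no real obstacle here beyond bookkeeping: the substantive content is all in Theorem~\ref{ogolny}, and the corollary is simply the observation that the local formula, optimized across $M_X$, is governed by the minimal fibre dimension of $m$.
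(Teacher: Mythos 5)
Your proof is correct and is exactly the reasoning the paper leaves implicit when it writes ``Obvious from Theorem~\ref{ogolny}'': the minimum $k_0$ is attained because fibre dimensions take finitely many integer values, at a point of $M^{k_0}$ the theorem gives local dimension $n-1-k_0$, and at every other point the minimum in the theorem is at least $k_0$, so the local dimension is at most $n-1-k_0$; combining with $\dim M_X=\max_a\dim_a M_X$ finishes the argument.
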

\begin{proof}
Obvious from Theorem \ref{ogolny}.
\end{proof}

\begin{rem}
In case $X$ is a collection of isolated points, its medial axis $M_X$ is exactly a conflict set of the singletons included in $X$. The dimension of $m(a)$ is equal to zero for any point $a\in\mathbb{R}^n$ and the global formula indeed gives $\dim M_X=n-1$, as predicted by the theory of conflict sets \cite{BirbrairSiersma}. Conversely, whenever for a given $a\in M_X$ the set $m(a)$ is not connected, the conflict set theory assures that the dimension $\dim_a M_X = n-1$. General formula for the dimension of the medial axis concludes then that in every  neighbourhood of $a$ there must exist a point $b$ of $M_X$ with $m(b)$ finite.
 
\end{rem}

\section{A frontier of a medial axis}

In \cite{Miura} T.Miura proposed a characterisation of the medial axis boundary for hypersurfaces. Unfortunately, the introduced notion did not escape flaws. Foremost, it does not recognise the studied side of the hypersurface, which may result in misleading data. In this paper we provide an improved definition, resulting in a generalisation of the claims from \cite{Miura}, proved with more straightforward reasoning.

To take notice of the direction of open balls used to study $X$, we define as follows. Take any point $a\in X$ and write $V_a:=N_aX\cap\mathbb{S}$ to be a set of \textit{directions normal} to $X$ at $a$. Then denote a \textit{limiting set of normal directions} by $$\widetilde{V}_a:=\limsup_{a_\nu\to a}V_{a_\nu}.$$

\begin{rem}
If $a\in X$ is a point of $\mathcal{C}^1$ smoothness, then of course the tangent spaces, and, what follows, also the normal spaces are continuous at $a$. Therefore, in such case, the limiting set of normal directions is just the set of normal directions.\end{rem}
Utilising the introduced sets, define for a point $a\in X$ as follows.

\begin{dfn}
For $v\in V_a$ we define a \textit{directional reaching radius} 
$$r_v(a):=\sup \lbrace t\geq 0|\,a\in m(a+tv)\rbrace.$$

Then for $v\in \widetilde{V}_a$ we define a \textit{limiting directional reaching radius}
$$\Tilde{r}_v(a):=\liminf\limits_{X\ni x\to a, V_x\ni v_x\to v\in \widetilde{V}_a}r_{v_x}(x),$$

and finally a \textit{reaching radius} at $a$
$$r(a)=\inf_{v\in \widetilde{V}_a}\tilde{r}_v(a).$$
\end{dfn}

Recall that for any $v\in N_aX$ a point $a+vr_v(a)$ is a center of a maximal (in a sense of inclusion) ball contained in $\mathbb{R}^n\backslash X$. A set of centers of maximal balls are gathered in a set called \textit{central set of X}. It is known (cf \cite{BirbrairDenkowski}) that a central set of a closed set lies between the medial axis of the set and its closure. 

At first glance, the infimum in the definition of the reaching radius might seem dubious for points with an empty limiting set of normal directions. By the definition of the infimum, we are inclined to post for any such point an infinite value of the reaching radius. Luckily, in the o-minimal geometry, the points with an empty limiting set of normal directions prove to be precisely the interior points of a given set. Clearly, any interior point has an empty limiting set of normal directions. On the other hand, every point on the boundary of a certain set $X$ can be reached by the regular part of the boundary of $X$. This is a consequence of the definability of the boundary and the nowhere denseness of the subset of singularities. Points of the regular part of the boundary of $X$ clearly have at least one normal direction, thus due to the compactness of the sphere, the limiting set of normal directions for the boundary points cannot be empty. 

\begin{rem}
As is easily seen from their definitions, both $\Tilde{r}_{v}(a)$, and $r(a)$ are lower semi-continuous functions. Moreover for $v\in \widetilde{V}_a\backslash V_a$ the limiting directional reaching radius is equal to zero.
\end{rem}

The backbone of the just defined reaching radius lies in the same place as the reaching radius introduced in \cite{BirbrairDenkowski} as 
$$\Dot{r}(a)=\begin{cases}
r'(a),& a\in Reg_2X,\\
\min\lbrace r'(a),\liminf\limits_{X\backslash\lbrace a\rbrace\ni x\to a} r'(a)\rbrace, & a\in Sng_2X  
\end{cases},$$
where
$$r'(a)=\inf_{v\in V_a}r_v(a)$$
is called \textit{weak reaching radius}.
One can perceive the difference between them as a sort of an order of taking limits problem. This paper emphasises the directional distance of the medial axis from a point first, rather than the distance from the point as Birbrair and Denowski did. Since the Birbrair-Denkowski reaching radius proved to be successful in describing $\overline{M_X}\cap X$, it would be desirable to achieve at least a type of correspondence between these two notions. Fortunately, as we will see in Theorem~\ref{Denkowski=mine}, the final results of both constructions are equal. 
For the sake of the next preparatory proposition, recall that \textit{the normal set} at $a\in X$ defined in \cite{BirbrairDenkowski} as
$$\mathcal{N}(a):=\lbrace x\in\mathbb{R}|\, a\in m(x)\rbrace $$
is always convex and closed. Moreover, for a subset of a unit sphere $A$ denote by $conv_\mathbb{S}(A)$ its convex hull in the spherical norm.

\begin{prop}
For any $a\in X$ a function $$\rho: V_a\ni v\rightarrow \rho(v) = r_v(a)\in [0,+\infty]$$ is upper semi-continuous on $V_a$.
Furthermore, it is continuous at $v\in V_a$ if there exist $r,\varepsilon>0$ such that $\mathbb{B}(v,r)\cap V_a=conv_\mathbb{S} (\mathbb{S}(v,r)\cap V_a)$ and $v\notin \overline{\rho^{-1}([0,\varepsilon])}$.
\end{prop}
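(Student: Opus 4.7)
The two claims separate cleanly: upper semi-continuity requires no extra hypothesis and follows from the structural properties of the normal set $\mathcal{N}(a)=\{x\in\mathbb{R}^n:a\in m(x)\}$, while continuity amounts, in addition, to a lower semi-continuity argument that exploits both hypotheses. I would start by recording that $\mathcal{N}(a)$ is convex (as recalled just before the statement) and closed (by upper semi-continuity of $m$), and rewrite $\rho(v)=\sup\{t\geq 0:a+tv\in\mathcal{N}(a)\}$, a supremum that is attained whenever it is finite.

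For the upper semi-continuity, take $v_\nu\to v$ in $V_a$, pass to a subsequence with $r_{v_\nu}(a)\to L\in[0,+\infty]$, and for any $T<L$ use that $a+Tv_\nu\in\mathcal{N}(a)$ converges to $a+Tv\in\mathcal{N}(a)$ by closedness; hence $\rho(v)\geq T$, and letting $T\uparrow L$ yields $\rho(v)\geq L$, i.e.\ $\limsup\rho(v_\nu)\leq\rho(v)$.

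The lower semi-continuity under both hypotheses is what the rest of the argument is for. Set $R:=\rho(v)$ and fix $T\in(0,R)$; the aim is to show $a+Tv_\nu\in\mathcal{N}(a)$ whenever $v_\nu\in V_a$ lies sufficiently close to $v$, which will give $r_{v_\nu}(a)\geq T$ and, letting $T\uparrow R$, the desired inequality. Shrinking $r$ (the spherical convexity of $\mathbb{B}(v,r)\cap V_a$ passes to smaller balls), one can use $v\notin\overline{\rho^{-1}([0,\varepsilon])}$ to secure $\rho>\varepsilon$ on $V_a\cap\mathbb{B}(v,r)$. The key geometric step is to represent $v_\nu$ as the spherical combination
\[v_\nu=\frac{(1-s)v+su}{\|(1-s)v+su\|},\qquad s=s(v_\nu)\to 0^+,\]
for some $u\in V_a\cap\mathbb{B}(v,r)$; then a direct rearrangement identifies the convex combination
\[a+Tv_\nu=\mu\cdot a+\lambda\cdot(a+Rv)+\beta\cdot(a+\varepsilon u)\]
with $\lambda=T(1-s)/(R\|(1-s)v+su\|)$, $\beta=Ts/(\varepsilon\|(1-s)v+su\|)$ and $\mu=1-\lambda-\beta$. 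These coefficients tend to $(1-T/R,T/R,0)$ as $s\to 0^+$, so all lie in $(0,1)$ for $v_\nu$ close enough to $v$; convexity of $\mathcal{N}(a)$ then places $a+Tv_\nu$ in $\mathcal{N}(a)$.

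The main obstacle I expect is the \emph{existence} of $u$ with the correct properties: it must lie in $V_a\cap\mathbb{B}(v,r)$ so that $\rho(u)>\varepsilon$, yet be at spherical distance from $v$ bounded away from zero uniformly in $\nu$ (in order to force $s\to 0$ alongside $v_\nu\to v$). This is precisely where the identity $\mathbb{B}(v,r)\cap V_a=conv_\mathbb{S}(\mathbb{S}(v,r)\cap V_a)$ is indispensable: it forces $v$ to lie in the spherical relative interior of the spherically convex set $\mathbb{B}(v,r)\cap V_a$, for otherwise $v$ could not be obtained as a spherical convex combination of points of $\mathbb{S}(v,r)\cap V_a$. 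Consequently the geodesic through $v$ and $v_\nu$ extends past $v_\nu$ inside $V_a\cap\mathbb{B}(v,r)$ all the way to $\mathbb{S}(v,r)\cap V_a$, and one takes $u$ to be the exit point there. Once $u$ is secured, the remainder of the argument is a bookkeeping exercise in the convex combination above.
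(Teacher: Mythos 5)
Your proposal follows essentially the same route as the paper: closedness and convexity of the normal set $\mathcal{N}(a)$ give upper semi-continuity, and lower semi-continuity comes from representing a nearby $v_\nu$ as a spherical interpolation between $v$ and an auxiliary point $u$ with $\rho(u)>\varepsilon$, then using convexity of $\mathcal{N}(a)$; your explicit coefficients $\lambda,\beta,\mu$ are a welcome elaboration of what the paper's proof states only tersely.

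One intermediate assertion, however, is incorrect and should be flagged. You claim that $\mathbb{B}(v,r)\cap V_a = conv_{\mathbb{S}}(\mathbb{S}(v,r)\cap V_a)$ \emph{forces} $v$ into the spherical relative interior of $\mathbb{B}(v,r)\cap V_a$, ``for otherwise $v$ could not be obtained as a spherical convex combination of points of $\mathbb{S}(v,r)\cap V_a$.'' That implication fails: take $V_a$ to be a closed hemisphere of $\mathbb{S}^2$ with $v$ on the bounding great circle. Then $\mathbb{B}(v,r)\cap V_a$ is a spherical half-disc, which does equal the spherical convex hull of its rim (the half-circle, whose two endpoints have $v$ as geodesic midpoint), yet $v$ lies on the relative boundary. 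Likewise, even when $v$ is interior, the geodesic from $v$ through $v_\nu$ need not exit at $\mathbb{S}(v,r)\cap V_a$: in a spherical quadrilateral with vertices on the rim, the extended geodesic exits through a side strictly inside $\mathbb{B}(v,r)$. What actually makes your construction work is weaker and requires a separate (short) argument: after shrinking $r$, the set $\mathbb{B}(v,r)\cap V_a$ becomes either a full spherical ball in $\mathrm{aff}_{\mathbb{S}}(V_a)$ or a half-ball bounded by a geodesic through $v$, so the geodesic through $v$ and $v_\nu$ extends to the (shrunken) rim, giving a $u\in V_a$ at distance $r$ from $v$ with $\rho(u)>\varepsilon$ and $s\to 0$. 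To be fair, the paper's own proof writes $x=tv+(1-t)w$ with $w$ on the rim just as quickly and leaves the same point implicit.
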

\begin{proof}
Observe firstly, that for an empty $V_a$ the function $\rho$ is continuous by the definition. Therefore, assume for the rest of the proof that $V_a$ is nonempty.

To prove the upper semi-continuity take $v_0\in V_a$ and any sequence of $v_\nu\rightarrow v_0$. Then, for any $r< \rho(v_\nu)$ a point $(a+rv_\nu)$ is contained in $\mathcal{N}(a)$. Therefore, from the closedness of $\mathcal{N}(a)$, a point $(a+rv)$ must lie in $\mathcal{N}(a)$ for every $r<\limsup_{v\to v_0} \rho(v)$ . Moreover, $\mathcal{N}(a)$ is convex, so the whole segment $[a,a+rv_0]$ must be a subset of $\mathcal{N}(a)$ as well. This means that $\rho(v_0)\geq \limsup_{v\to v_0}\rho(v)$.

For the sake of lower semi-continuity assume that $v\in V_a\backslash\overline{\rho^{-1}([0,\varepsilon])}$ for certain  $\varepsilon>0$. Now we can find $r>0$ small enough that $\rho(w)>\varepsilon$ for $w\in \mathbb{S}(v,r)\cap \mathcal{N}(a)$. Since $\mathbb{B}(v,r)\cap\mathcal{N}(a)=conv \mathbb{S}(v,r)\cap \mathcal{N}(a)$, by the convexity of $\mathcal{N}(a)$, the value of $\rho(x)$ for $x=tv+(1-t)w$ is bounded from below by $t\alpha+(1-t)\rho(w)$, for any $\alpha <\rho(v)$. Therefore, at $v$ the function $\rho$ must be lower semi-continuous . 
\end{proof}
Mind that even though the normal set $\mathcal{N}(a)$ is convex for any $a\in X$ it does not necessarily mean that an $r>0$ satisfying $\mathbb{B}(v,r)\cap V_a=conv_\mathbb{S} (\mathbb{S}(v,r)\cap V_a)$ for every $v\in V_a$ exists. Indeed, only an inclusion from the right to the left is automatic. Take for an example $X=\lbrace z=\sqrt{x^2+y^2}\rbrace$. We can see that $V_0=\lbrace z\leq \sqrt{x^2+y^2}\rbrace\cap\mathbb{S}$, thus for $v=(1,0,-1)$ and all $r>0$ there is $\mathbb{B}(v,r)\cap V_0\supsetneq conv_\mathbb{S} (\mathbb{S}(v,r)\cap V_a)$.

\begin{cor}
The function $\rho: V_a\ni v\rightarrow \rho(v) = r_v(a)\in \mathbb{R}$ is continuous for any $a\in Reg_2 X$.
\end{cor}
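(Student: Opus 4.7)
The plan is to deduce the continuity of $\rho$ directly from the preceding proposition by checking, at every $v\in V_a$, its two sufficient hypotheses: the local spherical-convex-hull equality $\mathbb{B}(v,r)\cap V_a=\mathrm{conv}_\mathbb{S}(\mathbb{S}(v,r)\cap V_a)$ for some $r>0$, and the separation $v\notin\overline{\rho^{-1}([0,\varepsilon])}$ for some $\varepsilon>0$. Once both conditions hold pointwise, the previous proposition yields continuity of $\rho$ on all of $V_a$.

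For the first condition, I would exploit the fact that $a\in Reg_2 X$ means $X$ is locally a $\mathcal{C}^2$ submanifold near $a$. Consequently the normal space $N_aX$ is an honest linear subspace of $\mathbb{R}^n$ and $V_a=N_aX\cap\mathbb{S}$ is a great subsphere of $\mathbb{S}$. For any two non-antipodal $v,w\in V_a$, the minimising great-circle arc joining them lies inside $\mathrm{span}(v,w)\subset N_aX$, and hence inside $V_a$; so $V_a$ is spherically geodesically convex. Taking $r$ small enough (say, smaller than $\sqrt{2}$ and than half the injectivity radius on $V_a$) then forces $\mathbb{B}(v,r)\cap V_a$ to be a proper spherical cap in $V_a$ that agrees with the spherical convex hull of its own boundary $\mathbb{S}(v,r)\cap V_a$.

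For the second condition, I would invoke positive local reach at a $\mathcal{C}^2$-regular point: since $a\in Reg_2 X$, a Federer-type estimate on the second fundamental form provides some $R>0$ such that $a\in m(a+tv)$ whenever $v\in V_a$ and $0\leq t<R$. In other words $r_v(a)\geq R$ uniformly in $v\in V_a$, so $\rho^{-1}([0,R/2])$ is empty and setting $\varepsilon=R/2$ gives $v\notin\overline{\rho^{-1}([0,\varepsilon])}$ at every $v\in V_a$.

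The only real obstacle is the convex-hull step: one must take care to separate Euclidean balls $\mathbb{B}(v,r)$ from geodesic balls in $\mathbb{S}$ and verify that for small $r$ the set $\mathbb{B}(v,r)\cap V_a$ really coincides with the spherical convex hull of its boundary within $V_a$. The $Reg_2$ assumption, by linearising $N_aX$, makes this the most favourable case, reducing the check to a direct computation on a great subsphere. With both hypotheses secured at every $v\in V_a$, the previous proposition applies and delivers the continuity of $\rho$ on the whole of $V_a$.
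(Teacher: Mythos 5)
Your proof is correct and follows essentially the same route as the paper's: both verify the two sufficient hypotheses of the preceding proposition at every $v\in V_a$, obtaining the spherical-convex-hull equality from the observation that $V_a=N_aX\cap\mathbb{S}$ is a great subsphere, and obtaining the uniform lower bound on $\rho$ from $\mathcal{C}^2$-regularity of $X$ near $a$. The paper phrases the lower-bound step via the fact $\overline{M_X}\cap Reg_2X=\emptyset$, while you invoke a Federer-type positive local reach estimate directly; these amount to the same underlying fact. Incidentally, your restriction to $r<\sqrt{2}$ for the convex-hull equality is actually the sharp bound, whereas the paper states $r<2$ (for $\sqrt{2}<r<2$ the spherical convex hull of $\mathbb{S}(v,r)\cap V_a$ is the complementary cap, so the equality fails); since only existence of some $r>0$ is needed, this does not affect the result.
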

\begin{proof}
Corollary follows easily the fact that $\overline{M_X}\cap Reg_2X=\emptyset$. Clearly, there must exist such $\varepsilon>0$ that $\rho(v)>\varepsilon$ for every $v\in V_a$. Furthermore, $V_a$ is just an intersection of a unit sphere with a normal space $N_aX$. Therefore, it is isomorphic to $\mathbb{S}^{n-\dim X}$, thus $$\mathbb{B}(v,r)\cap V_a=conv_\mathbb{S}(\mathbb{S}(v,r)\cap V_a)$$ for any $r<2$.
\end{proof}

Whenever the limiting directional reaching radius is positive, it can be seen as a limiting directional reaching radius transported from a $\mathcal{C}^1$ submanifold formed in a certain open set by $d^{-1}(\varepsilon)$.

\begin{lem}\label{szkielet_dla_epsilon}
For $X$ a closed subset of $\mathbb{R}^n$ and $\varepsilon>0$, denote $$X^\varepsilon:=\lbrace x\in \mathbb{R}^n|d(x,X)\leq\varepsilon\rbrace.$$ Then $x\in M_{X^\varepsilon}$ if and only if $x\in M_X$ and $d(x,X)>\varepsilon$. 
\end{lem}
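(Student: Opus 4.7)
The plan is to prove both directions at once by constructing, for any $x$ with $d(x,X) > \varepsilon$, an explicit cardinality-preserving bijection between $m_X(x)$ and $m_{X^\varepsilon}(x)$; the equivalence of the two medial-axis conditions then follows immediately.

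First I would dispose of the easy half. If $d(x,X) \leq \varepsilon$, then $x\in X^\varepsilon$, so $d(x,X^\varepsilon)=0$ and $m_{X^\varepsilon}(x)=\{x\}$; in particular $x\notin M_{X^\varepsilon}$. This shows that the condition $d(x,X)>\varepsilon$ is necessary in the forward direction.

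Assume now $d(x,X) > \varepsilon$. A direct triangle-inequality argument (every $z\in X^\varepsilon$ lies within $\varepsilon$ of some $y\in X$) gives $d(x,X^\varepsilon)\geq d(x,X)-\varepsilon$. To see the bound is attained I would define, for each $y\in m_X(x)$, the interpolation point
\[
z_y := y + \frac{\varepsilon}{d(x,X)}(x-y) = (1-\lambda)y + \lambda x, \qquad \lambda=\frac{\varepsilon}{d(x,X)}\in(0,1).
\]
Then $\|z_y-y\|=\varepsilon$ forces $z_y\in X^\varepsilon$, and $\|x-z_y\|=d(x,X)-\varepsilon$, so $z_y\in m_{X^\varepsilon}(x)$ and the lower bound is sharp. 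Because $\lambda<1$, the assignment $y\mapsto z_y$ is injective as an affine map.

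The crux of the argument, and the step I expect to be the main obstacle, is proving that every element of $m_{X^\varepsilon}(x)$ arises in this form. Given $z\in m_{X^\varepsilon}(x)$, pick any $y'\in m_X(z)$; the chain
\[
\|x-y'\|\leq \|x-z\|+\|z-y'\| = \bigl(d(x,X)-\varepsilon\bigr)+d(z,X)\leq d(x,X)
\]
must be a chain of equalities. Reading off the three equalities simultaneously yields $d(z,X)=\varepsilon$, $y'\in m_X(x)$, and the collinearity of $x,z,y'$ with $z$ between them; these together force $z=z_{y'}$. This establishes surjectivity, hence $y\leftrightarrow z_y$ is a bijection between $m_X(x)$ and $m_{X^\varepsilon}(x)$, and $\#m_{X^\varepsilon}(x)=\#m_X(x)$. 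Both implications of the lemma now drop out: $x\in M_{X^\varepsilon}$ iff $\#m_{X^\varepsilon}(x)>1$ iff $\#m_X(x)>1$ iff $x\in M_X$, under the standing assumption $d(x,X)>\varepsilon$.
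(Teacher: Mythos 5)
Your proof is correct and takes essentially the same route as the paper's: the paper observes that $d(x,X)=d(x,X^\varepsilon)+\varepsilon$ when $d(x,X)>\varepsilon$ and that $m_{X^\varepsilon}(x)$ is the image of $m_X(x)$ under a homothety of ratio $\tfrac{d(x,X)-\varepsilon}{d(x,X)}$ centred at $x$, which is precisely your map $y\mapsto z_y$. You simply spell out the injectivity and, more usefully, the surjectivity (via the forced chain of equalities) that the paper leaves implicit.
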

\begin{proof}
Surely $x\in M_{X^\varepsilon}$ implies $d(x,X)>\varepsilon$, otherwise $x$ would be a point of $X^\varepsilon$. Furthermore, for any point $x\in\mathbb{R}^n$ with $d(x,X)>\varepsilon$ there is $$d(x,X)=d(x,X^\varepsilon)+\varepsilon.$$

Now, for any point $x\notin X^\varepsilon$ the set $m_{X^\varepsilon}(x)$ is just $m_X(x)$ scaled by a homothety of ratio $\frac{d(x,X)-\varepsilon}{d(x,X)}$ centered at $x$. Therefore, $m_{X^\varepsilon}(x)$ is a singleton if and only if $m_X(x)$ is one as well.
\end{proof}

\begin{prop}\label{radius_for_epsilon}
Take $a\in X$ a point of a closed subset of $\mathbb{R}^n$, $v\in V_a$, and $\varepsilon>0$. Denote by $\tilde{r}$ and $\tilde{r}^\varepsilon$ the limiting reaching radius for $X$ and $X^\varepsilon$ respectively. Then $\tilde{r}_v(a)=\tilde{r}^\varepsilon_v(a+\varepsilon v)+\varepsilon$ whenever $\tilde{r}_v(a)>\varepsilon$. 
\end{prop}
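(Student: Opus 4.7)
The plan is to bootstrap a pointwise identity between $r_v^X$ and $r_v^{X^\varepsilon}$ up to an identity between the $\liminf$s defining $\tilde r$ and $\tilde r^\varepsilon$, using the offset map $(x,v_x)\mapsto(x+\varepsilon v_x, v_x)$. First I would establish the pointwise version: whenever $v\in V_a$ and $r_v(a)>\varepsilon$, the translate $a+\varepsilon v$ lies on $\partial X^\varepsilon$, one has $v\in V^{X^\varepsilon}_{a+\varepsilon v}$, and
\[r_v^{X^\varepsilon}(a+\varepsilon v)=r_v(a)-\varepsilon.\]
This is an immediate consequence of the homothety formula appearing in the proof of Lemma~\ref{szkielet_dla_epsilon}: for any $s\in(\varepsilon,r_v(a))$ the homothety of ratio $(s-\varepsilon)/s$ centred at $a+sv$ carries $a\in m_X(a+sv)$ to $a+\varepsilon v\in m_{X^\varepsilon}(a+sv)$, giving $r_v^{X^\varepsilon}(a+\varepsilon v)\geq s-\varepsilon$; passing $s\nearrow r_v(a)$ yields one inequality, while running the homothety in reverse starting from any $t<r_v^{X^\varepsilon}(a+\varepsilon v)$ at the point $(a+\varepsilon v)+tv$ recovers $a\in m_X(a+(t+\varepsilon)v)$ and hence the other.

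I would then match admissible approximating sequences on the two sides. For the bound $\tilde r^\varepsilon_v(a+\varepsilon v)+\varepsilon\leq \tilde r_v(a)$, any admissible sequence $x_\nu\to a$ in $X$ with $V_{x_\nu}\ni v_\nu\to v$ eventually satisfies $r_{v_\nu}(x_\nu)>\varepsilon$, thanks to the hypothesis $\tilde r_v(a)>\varepsilon$, so the push-forward $y_\nu:=x_\nu+\varepsilon v_\nu$ is an admissible sequence for $\tilde r^\varepsilon_v(a+\varepsilon v)$ with $r_{v_\nu}^{X^\varepsilon}(y_\nu)=r_{v_\nu}(x_\nu)-\varepsilon$, and taking the infimum over such sequences gives the bound. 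For the reverse bound, given any admissible sequence $y_\nu\to a+\varepsilon v$ in $X^\varepsilon$ with $V^{X^\varepsilon}_{y_\nu}\ni v_\nu\to v$, I would set $x_\nu:=y_\nu-\varepsilon v_\nu$; the reverse homothety identifies $x_\nu$ as an element of $m_X(y_\nu)\subset X$ with $v_\nu\in V_{x_\nu}$ and $r_{v_\nu}(x_\nu)=r_{v_\nu}^{X^\varepsilon}(y_\nu)+\varepsilon$, and once more an infimum over sequences delivers the inequality.

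The main technical obstacle is the pull-back step: one has to verify that every normal direction to $X^\varepsilon$ at a boundary point $y$ is genuinely of the form $(y-x)/\varepsilon$ for some foot $x\in m_X(y)$. This is the reverse direction of the homothety bijection between $m_X$ and $m_{X^\varepsilon}$ outside $X^\varepsilon$ that underlies Lemma~\ref{szkielet_dla_epsilon}, and once it is in place the offset map provides a bijective correspondence between the admissible pairs near $(a,v)$ on $X$ and near $(a+\varepsilon v, v)$ on $X^\varepsilon$, so the two $\liminf$s match term by term and the claimed identity $\tilde r_v(a)=\tilde r^\varepsilon_v(a+\varepsilon v)+\varepsilon$ follows.
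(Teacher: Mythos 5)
Your proposal is correct and follows essentially the same two-step strategy as the paper: first establish the pointwise identity $r_{v_x}(x)=r^{X^\varepsilon}_{v_x}(x+\varepsilon v_x)+\varepsilon$ for $(x,v_x)$ near $(a,v)$ with $r_{v_x}(x)>\varepsilon$, then transfer it to the $\liminf$s by matching admissible sequences through the offset map $(x,v_x)\mapsto(x+\varepsilon v_x,v_x)$. The one substantive divergence is in how the correspondence of sequences is justified. The paper notes that $\tilde r_v(a)>\varepsilon$ forces $d^{-1}(\varepsilon)$ to be a $\mathcal{C}^1$ hypersurface near $a^\varepsilon$, so the normal direction there is unique and given by $\nabla d$, and it then invokes the coincidence of $M_X$ and $M_{X^\varepsilon}$ outside $X^\varepsilon$ from Lemma~\ref{szkielet_dla_epsilon}; you instead run the homothety bijection between $m_X$ and $m_{X^\varepsilon}$ from that lemma's proof directly, in both directions. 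Your route is somewhat more elementary, and the ``technical obstacle'' you flag --- that every normal $w\in V^{X^\varepsilon}_y$ at a boundary point $y$ pulls back to $y-\varepsilon w\in m_X(y)$ with $w\in V_{y-\varepsilon w}$ and $r_w(y-\varepsilon w)>\varepsilon$ --- is indeed closed by the very same homothety argument (take $t>0$ with $y\in m_{X^\varepsilon}(y+tw)$; the inverse homothety centred at $y+tw$ carries $y$ to $y-\varepsilon w\in m_X(y+tw)$, hence $y-\varepsilon w\in m_X(y)$ and $r_w(y-\varepsilon w)\geq t+\varepsilon$), so it is not a genuine gap but an omitted verification. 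What the paper's $\mathcal{C}^1$ detour buys is the clean bijective formulation of the sequence correspondence in one stroke, rather than two separate push-forward and pull-back inequalities, but the mathematical content is the same.
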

\begin{proof}

Since $\tilde{r}_v(a)>\varepsilon$, there exists $U$ a neighbourhood of $(a,v)$ in $VX:=\lbrace (x, v)|\, x\in X, v\in V_x\rbrace$ such that for any $(x,v_x)\in U$ there is $r_{v_x}(x)>\epsilon$. It means that for $a^\varepsilon:=(a+\varepsilon v)$ there exists a neighbourhood $W$ in $\mathbb{R}^n$ such that $\Gamma:=d^{-1}(\varepsilon)\cap W$ is a $\mathcal{C}^1$-smooth manifold. Moreover a series of equalities: $$N_{a^\varepsilon}\Gamma=(T_{a^\varepsilon}\Gamma)^\perp=(\nabla d)(a^\varepsilon)\cdot\mathbb{R}=\frac{a^\varepsilon - m(a^\varepsilon)}{\|a^\varepsilon - m(a^\varepsilon)\|}\cdot\mathbb{R} = (a^\varepsilon  - a)\cdot\mathbb{R}=v\mathbb{R},$$
proves that $v$ is a normal vector to $\Gamma$ at $a^\varepsilon$. Therefore, it is indeed possible to calculate $\tilde{r}_v^\varepsilon (a^\varepsilon)$.

According to Lemma~\ref{szkielet_dla_epsilon} the medial axes of $X$ and $X^\varepsilon$ coincide in $\mathbb{R}^n\backslash X^\varepsilon$. Therefore, for $(x,v_x)\in U$ from 
$x+r_{v_x}(x)v_x\in \overline{M_X\cap (x+v_x\mathbb{R})}$ we can derive easily $$x+r_{v_x}(x)v_x\in \overline{M_{X^\varepsilon}\cap (x+v_x\mathbb{R})} \text{ and } ([\varepsilon,r_{v_x}(x))\cdot v_x+x)\cap M_{X^\varepsilon}=\emptyset.$$
Thus, $r_{v_x}(x)=r_{v_x}^\varepsilon(x^\varepsilon)+\varepsilon$, where $x^\varepsilon:=x+\varepsilon v_x$, and $r^\varepsilon$ denotes the directional radius for $\Gamma$ (an explanation behind $v_x\in V_{x^\varepsilon}$ is the same as for $a$ in the first part of the proof). Furthermore, a sequence of  $(x_\nu,v_\nu)\in VX$ converges to $(a,v)$ if and only if a sequence $(x^\varepsilon_\nu,v_\nu)\in V\Gamma$ converges to $(a^\varepsilon,v)$. Therefore, the appropriate limits in the definition of the limiting reaching radius are equal.
\end{proof}

The main idea of the limiting directional reaching radius is to provide a suitable object for the generalisation of the results from \cite{Miura}. Indeed, the limiting directional reaching radius can be utilised to describe the frontier of the medial axis for a broader class of sets. Mind here, that in the contrast to the results in previous sections the following ones do not assume definability of a set $X$. 
 
\begin{thm}\label{Miurat}
Let $X$ be a closed subset of $\mathbb{R}^n$. Pick $x\in \mathbb{R}^n\backslash (X\cup M_X)$ and write $m(x)=\lbrace a\rbrace$, $v=\frac{x-a}{\|x-a\|}$. Then for $x\in\overline{M_X}$ there is $d(x)\geq \Tilde{r}_{v}(a)$. If additionally $\Tilde{r}_v(a)>0$, then $d(x)\geq \Tilde{r}_{v}(a)$ implies $x\in\overline{M_X}$.
\end{thm}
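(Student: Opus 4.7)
My plan is to split the theorem into its two implications and treat them separately.

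\textbf{First implication.} For the direction $x\in\overline{M_X}\implies d(x)\geq \tilde r_v(a)$, I would construct an explicit sequence that realises the liminf defining $\tilde r_v(a)$. Take $x_\nu\in M_X$ with $x_\nu\to x$, and for each $\nu$ select two distinct points $a_\nu,b_\nu\in m(x_\nu)$; upper semi-continuity of $m$ combined with $m(x)=\{a\}$ being a singleton forces $a_\nu,b_\nu\to a$. Define $v_\nu:=(x_\nu-a_\nu)/\|x_\nu-a_\nu\|\in V_{a_\nu}$, so that $v_\nu\to v$. The crux is the identity $r_{v_\nu}(a_\nu)=\|x_\nu-a_\nu\|$: the inequality $\geq$ is immediate from $a_\nu\in m(x_\nu)$, while the reverse follows by comparing $\|a_\nu+sv_\nu-b_\nu\|^2$ with $\|a_\nu+sv_\nu-a_\nu\|^2$ for $s>\|x_\nu-a_\nu\|$ and using Cauchy--Schwarz with strict inequality (the equality case would force $b_\nu=a_\nu$) to conclude that $b_\nu$ is strictly closer. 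Feeding this into the definition of $\tilde r_v(a)$ as a liminf over $VX$ then yields $\tilde r_v(a)\leq\liminf_\nu r_{v_\nu}(a_\nu)=d(x)$.

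\textbf{Second implication.} For the direction $d(x)\geq\tilde r_v(a)>0\implies x\in\overline{M_X}$, I would reduce to the case of a smooth hypersurface via the thickening $X^\varepsilon$ developed earlier in the section. Fix $\varepsilon\in(0,\tilde r_v(a))$. Proposition~\ref{radius_for_epsilon} rephrases the inequality as $d(x,X^\varepsilon)\geq \tilde r^\varepsilon_v(a+\varepsilon v)$, while Lemma~\ref{szkielet_dla_epsilon}, together with $d(x,X)>\varepsilon$, identifies $x\in\overline{M_X}$ with $x\in\overline{M_{X^\varepsilon}}$; the homothetic description of $m_{X^\varepsilon}$ in the same lemma also yields $m_{X^\varepsilon}(x)=\{a+\varepsilon v\}$. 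Hence I may replace $X$ by $X^\varepsilon$ and assume $X$ has $C^1$-smooth boundary near $a$, with $V_{a'}$ reduced to the singleton containing the continuous outward unit normal $\nu(a')$.

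In this smooth setting I would take a sequence $a_n\to a$ on $\partial X$ realising $r_{\nu(a_n)}(a_n)\to\tilde r_v(a)$ and form the focal points $y_n:=a_n+r_{\nu(a_n)}(a_n)\nu(a_n)$. By the remark recalled just before the theorem, each $y_n$ is the centre of a maximal ball in $\mathbb{R}^n\setminus X$, so it belongs to the central set and hence to $\overline{M_X}$; the sequence $y_n$ converges to $a+\tilde r_v(a)v$. If $d(x)=\tilde r_v(a)$, this already gives $x\in\overline{M_X}$. The main obstacle is to close the possibly nonzero gap between $\tilde r_v(a)$ and $d(x)$, bounded above by $r_v(a)$ thanks to $m(x)=\{a\}$ (which forces $x\in\mathcal N(a)\cap(a+v\mathbb R_+)$); here I would exploit the $C^1$ continuity of $\nu$ and the upper semi-continuity of $r_\nu$ to perturb the base points $a_n$ on $\partial X$ and construct, via a diagonal procedure, focal-point sequences sweeping the segment $[a+\tilde r_v(a)v,a+r_v(a)v]$ in $\overline{M_X}$ and in particular accumulating at $x$. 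Carrying out this sweep rigorously in the merely $C^1$ setting is the technical heart of the argument.
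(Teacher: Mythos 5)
Your first implication is correct and matches the paper's argument: both take $M_X\ni x_\nu\to x$, choose $a_\nu\in m(x_\nu)$, use upper semi-continuity of $m$ to get $a_\nu\to a$ and hence $v_\nu=(x_\nu-a_\nu)/\|x_\nu-a_\nu\|\to v$, and conclude $\tilde r_v(a)\leq\liminf r_{v_\nu}(a_\nu)=\lim d(x_\nu)=d(x)$. You are in fact more careful than the paper here, since you justify $r_{v_\nu}(a_\nu)=\|x_\nu-a_\nu\|$ via a second realising point $b_\nu\in m(x_\nu)$ and a Cauchy--Schwarz comparison, an equality the paper uses silently. Your reduction of the second implication to the $\mathcal C^1$ hypersurface case via $X^\varepsilon$, Lemma~\ref{szkielet_dla_epsilon} and Proposition~\ref{radius_for_epsilon} is also exactly the paper's reduction (the paper works with $d^{-1}(\varepsilon)$, which near the point of interest is $\partial X^\varepsilon$).

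The gap is in the smooth case of the second implication, and you flag it yourself. Your plan produces, from $a_n\to a$ realising $\tilde r_v(a)$, focal points $y_n=a_n+r_{\nu(a_n)}(a_n)\nu(a_n)$ in the central set, hence in $\overline{M_X}$, accumulating at $a+\tilde r_v(a)v$. That only handles $d(x)=\tilde r_v(a)$. To reach the generic point $x=a+tv$ with $\tilde r_v(a)<t\leq r_v(a)$ you propose a ``sweep'' by perturbing base points, but there is no mechanism that makes this work: the directional reaching radius is only \emph{upper} semi-continuous in $(a,v)$, so perturbations can make $r_{\nu(a_n)}(a_n)$ drop but give you no intermediate-value property to hit the prescribed $t$. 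The Chazal--Soufflet configuration of Example~\ref{Chazal}, which sits right after the theorem, is exactly the situation where the focal values from the two smooth strata near $a$ cluster at the two endpoints of the interval $[\tilde r_v(0),r_v(0)]=[1,2]$ without obviously filling it in, yet the whole segment lies in $\overline{M_X}$. The paper sidesteps all of this by arguing contrapositively: assuming $x\notin\overline{M_X}$, it uses the $\mathcal C^1$ regularity of $d$ near $x$ to form the level set $\Gamma=d^{-1}(d(x)+\varepsilon)$, slices it by $a+T_aX+\Rz v$, and shows that $m$ restricted to the slice is injective (a coincidence of nearest points would force a tangent vector to equal a normal vector in the limit), so by Brouwer's domain invariance $m$ is a local homeomorphism onto a neighbourhood of $a$ in $X$. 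This produces, for every $b$ near $a$ in $X$, a normal direction $\eta_b\to v$ with $r_{\eta_b}(b)>d(x)$, whence $\tilde r_v(a)>d(x)$, contradicting the hypothesis. If you want a direct rather than contrapositive proof, the local surjectivity of $m$ onto a neighbourhood of $a$ is the ingredient that replaces your sweep, and it is precisely what the injectivity-plus-domain-invariance step supplies; without it the argument does not close.
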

\begin{proof}
Assume that $x\in \overline{M_X}\backslash M_X$ and take a sequence of points $M_X\ni x_\nu\rightarrow x$. Since the multifunction $m(x)$ is upper semi-continuous, for an arbitrary choice of $a_\nu\in m(x_\nu)$ we also have $a_\nu\rightarrow a$. This of course means that $x_\nu-a_\nu\rightarrow x-a$. Taking $v_\nu=\frac{x_\nu-a_\nu}{\|x_\nu-a_\nu\|}$ we obtain by calculating the limiting directional reaching radius $$\Tilde{r}_{v}(a)\leq \liminf_{\nu\to\infty} r_{v_\nu}(a_\nu)=\lim_{\nu\to\infty} d(x_\nu)=d(x).$$

We will first prove the remaining part of the theorem with an additional assumption that $X$ is a $\mathcal{C}^1$ smooth submanifold in the neighbourhood of $a$. Assume accordingly that $\Tilde{r}_v(a)>0$ and $x\notin \overline{M_X}$. We will show that $d(x)<\Tilde{r}_v(a)$. 

At the very beginning, let us recall that outside of $\overline{M_X}\cup X$ the function $d(x)$ is of $\mathcal{C}^1$ class. Therefore, we can find $\varepsilon>0$ small enough that $m(x+\varepsilon v)=m(x)$ and a neighbourhood $U$ of $x^\varepsilon:=(x+\varepsilon v)$ such that $\Gamma:= d^{-1}(d(x^\varepsilon))\cap U$ is a $\mathcal{C}^1$ hypersurface disjoint from $M_X$. 

Now let us denote by $\Gamma'$ the intersection of $\Gamma$ and $(T_aX+\mathbb{R}v)$ translated by the vector $a$. The intersection is transversal as $v=\nabla d(x^\varepsilon)$, so $\Gamma'$ is a ($\dim X$)-dimensional $\mathcal{C}^1$ submanifold of $\mathbb{R}^n$. Mind that in particular tangent spaces to $X$ at $a$ and $\Gamma'$ at $x^\varepsilon $ are equal.

We claim that there exists an open neighbourhood $U'$ of $x^\varepsilon$ such that $m|U'\cap \Gamma'$ is an injection. Suppose otherwise. Then there exists a sequence of pairs of distinct points $x_\nu, y_\nu\in\Gamma'$ converging to $x^\varepsilon$ such that $m(x_\nu)=m(y_\nu)$. Since a multifunction $m$ is univalued in $U$, we can write
\begin{multline*}
    \frac{x_\nu-y_\nu}{\|x_\nu-y_\nu\|}=\\
    =\frac{1}{\|x_\nu-y_\nu\|}\left[ m(x_\nu)-d(x^\varepsilon)\nabla d(x_\nu)-(m(y_\nu)-d(x^\varepsilon)\nabla d(y_\nu)) \right].
\end{multline*}
Now, since $\Gamma'$ is $\mathcal{C}^1$ smooth left side of the equation tends to a vector in $T_{x^\varepsilon}\Gamma'=T_aX$ as $\nu\to\infty$ (cf. \cite{BigolinGrecko}). On the other hand the square bracket on the right side represents a difference of two vectors in $N_{m(x_\nu)}X$ which by the $\mathcal{C}^1$ smoothness of $X$ must tend to a vector in $N_aX$. This is a contradiction as the limit cannot be equal to zero. Therefore, the claim is proved.

Now, Brouwer Domain Invariance theorem asserts that $m|U'\cap\Gamma'$ is a homeomorphism, thus $m(U'\cap\Gamma')$ is an open neighbourhood of $a$ in $X$. Moreover, for $b\in m(U'\cap\Gamma')$ we have found the normal vectors $\eta_b$ such that $r_{\eta_b}(b)>d(x^\varepsilon)=d(x)+\varepsilon$ and $\eta_b\to v\,(b\to a)$. What is more, since $\Tilde{r}_v(a)>0$, all directional radii are continuous in a neighbourhood of $(a,v)$. Since $U'\cap\overline{M_X}=\emptyset$ this means that $d(x)<\Tilde{r}_v(a)$.

For $a\notin Reg_1 X$ observe, that for a positive $\varepsilon<\Tilde{r}_v(a)$ there exists a neighbourhood of $(a+\varepsilon v)$ such that $d^{-1}(\varepsilon)$ is a $\mathcal{C}^1$ submanifold of $\mathbb{R}^n$. In such a case the distance $d(x,X)$ equals $d(x,d^{-1}(\varepsilon))+\varepsilon$, and the medial axis $M_X$ coincides with $M_{d^{-1}(\varepsilon)}$ in a certain neighbourhood of $x$.
Moreover, for $(a_\nu,v_\nu)$ sufficiently close to $(a,v)$ the directional reaching radius $r_{v_\nu}(a_\nu)$ calculated for $X$ equals the directional reaching radius  $r_{v_\nu}(a_\nu+\varepsilon v_\nu)+\varepsilon$ calculated for $d^{-1}(\varepsilon)$.
Therefore, we can apply the result for $\mathcal{C}^1$ submanifolds to  $d^{-1}(\varepsilon)$ to obtain the assertion.
\end{proof}

In comparison to Miura's results, the main asset of the reaching radius-based approach is a lack of need to assume neither nonspreading normal cones for $X$, nor the graph structure of $X$. This gives a significantly broader application potential.

\begin{figure}[h]
    \centering

    \begin{subfigure}{0.48\textwidth}
        \includegraphics[width=\textwidth]{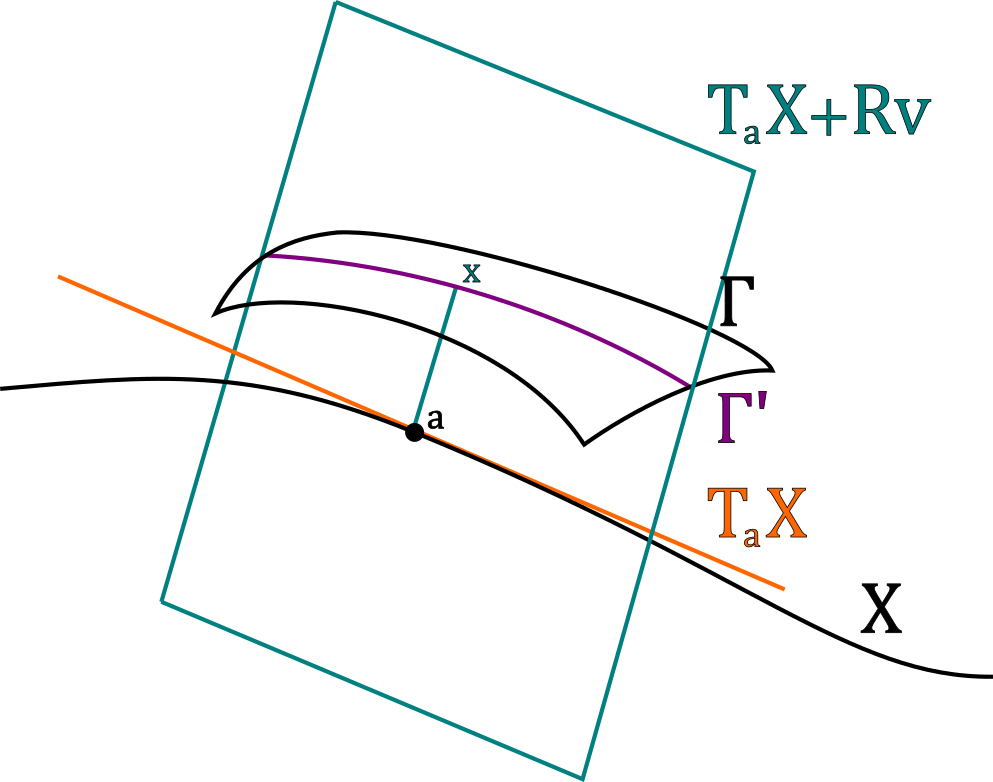}
        \caption{}
    \end{subfigure}  
    \begin{subfigure}{0.48\textwidth}
        \includegraphics[width=\textwidth]{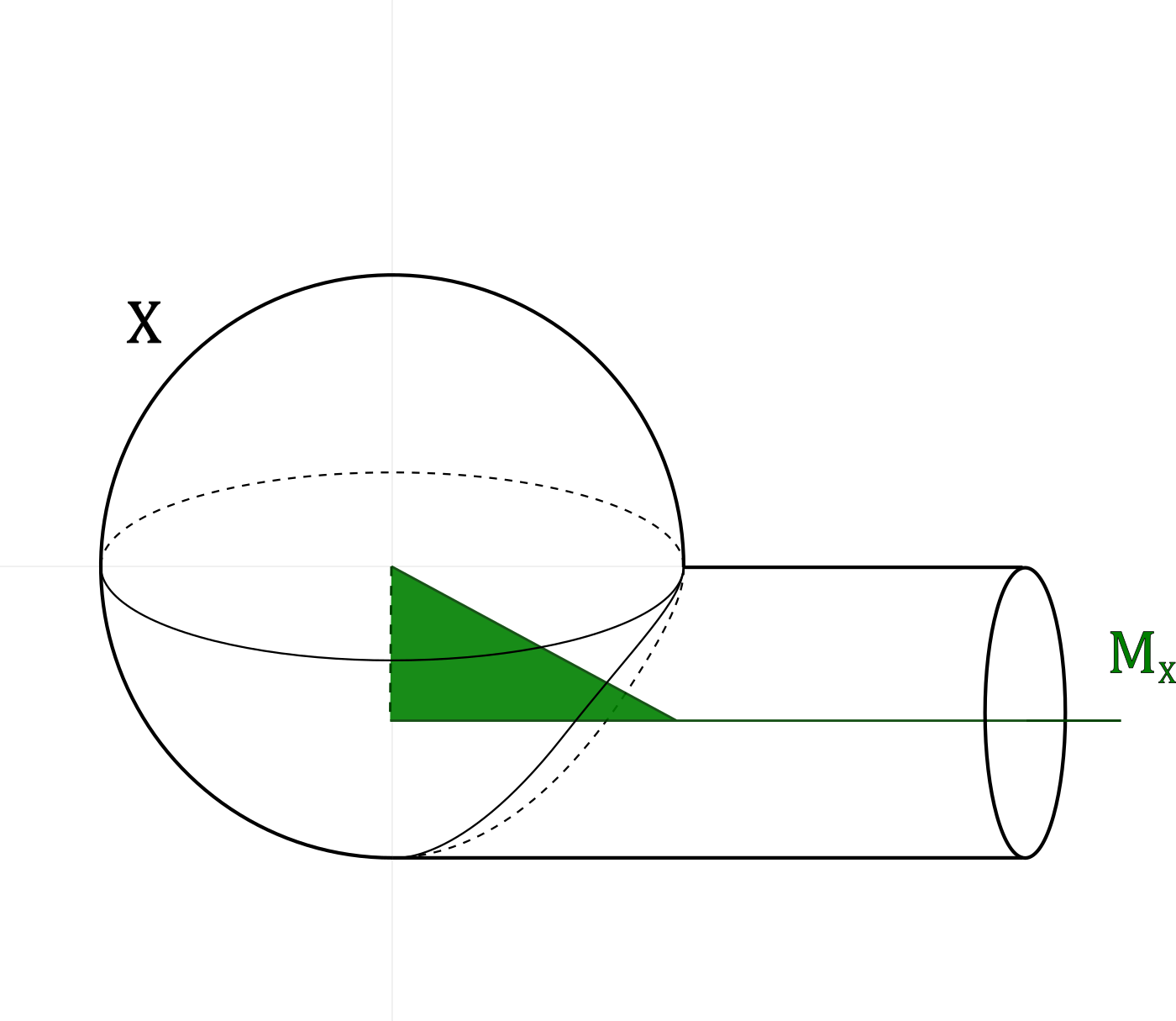}
        \caption{}
    \end{subfigure}
    \caption{(A) Theorem \ref{Miurat}. The intersection of $\Gamma $ and $ T_aX +\mathbb{R}v$ is transversal, hence the results forms a submanifold of dimension $\dim_a X$. (B) Example \ref{Chazal}, the example by Chazal and Soufflet. Mark that, apart of the center of the sphere, points above the origin does not belong to the medial axis of $X$. }
    \label{fig:my_label}
\end{figure}
\begin{ex}(Chazal, Soufflet \cite{Chazal})\label{Chazal}
Consider $$X:=\partial \left( \mathbb{B}((0,0,2),2)\cup \lbrace y^2+(z-1)^2<1,\,x>0\rbrace\right) \subset \mathbb{R}^3.$$ 
Then for any point $x_t=(0,0,t)$ with $t\in[1,2)$ there is $m(x_t)=\lbrace 0\rbrace$. At the same time, $\tilde{r}_{v}(0)\leq \lim_{n\to\infty} r_v((1/n,0,0))=1$ where $v=\frac{x_t}{\|x_t\|}$. Therefore $x_t\in \overline{M_X}\backslash M_X$.

\end{ex}

\begin{rem}
Theorem \ref{Miurat} can be further generalised with virtually no change in the proof if we observe that only the existence of a neighbourhood $U$ of $x$ such that positive limit inferior of directional radii taken by the sequences in $m(U)$ is needed. With this approach, one can omit sequences of points that do not contribute actively to $M_X$ near $x$ (cf the points above the origin point of $X=\lbrace (y-x^2)(y-2x^2)=0\rbrace$ and $v=(0,1)$).
\end{rem}

\begin{rem} Theorem~\ref{Miurat} deserves also an exposition in the correspondence with our study of a tangent cone of the medial axes. Namely, for any $a\in X, v\in V_a$ we always have 
$$[a+\Tilde{r}_v(a),a+r_v(a)]\subset \overline{M}_X.$$
Even though the diameter of $m(x)$ is usually not separated from zero in the neighbourhood of the medial axis boundary, for all $x$ in an open segment $(a+r_v(a),a+\Tilde{r}_v(a))$ we are able to find a line $v\mathbb{R}$ in $C_xM_X$.
\end{rem}

As an example of an application of theorem~\ref{Miurat} we will prove a new result on the Birbrair-Denkowski reaching radius.

\begin{prop}\label{ciaglosc_promienia}
Weak reaching radius is continuous on $Reg_2 X$.
\end{prop}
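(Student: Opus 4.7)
The plan is to prove upper and lower semicontinuity of $r'$ separately at every $a_0\in Reg_2 X$, combining the fiberwise continuity of $v\mapsto r_v(a)$ from the preceding corollary with Theorem~\ref{Miurat}.

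First I would establish the joint upper semicontinuity of $(a,v)\mapsto r_v(a)$ on the unit normal bundle over $X$. The closedness of the graph of $m$---a consequence of the upper semicontinuity of $m$ and the continuity of $d$---shows that if $(a_\nu,v_\nu)\to(a,v)$ and $a_\nu+tv_\nu\in\mathcal{N}(a_\nu)$ along a subsequence, then $a+tv\in\mathcal{N}(a)$, hence $r_v(a)\geq\limsup r_{v_\nu}(a_\nu)$. Since on $Reg_2 X$ the normal spaces $V_a$ vary continuously, any $v_0\in V_{a_0}$ with $r_{v_0}(a_0)<r'(a_0)+\varepsilon$ admits a lift $v_\nu\in V_{a_\nu}$ with $v_\nu\to v_0$. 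This yields $\limsup r'(a_\nu)\leq r_{v_0}(a_0)<r'(a_0)+\varepsilon$, and letting $\varepsilon\to 0$ concludes upper semicontinuity.

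For lower semicontinuity, I take a sequence $a_\nu\to a_0$ with $r'(a_\nu)\to L$ and aim to prove $L\geq r'(a_0)$. By the preceding corollary and compactness of $V_{a_\nu}$, the infimum $r'(a_\nu)$ is attained at some $v_\nu\in V_{a_\nu}$; passing to a subsequence, $v_\nu\to v_0\in V_{a_0}$. If $L=0$, either $r'(a_\nu)=0$ along a subsequence---forcing $a_\nu\in\overline{M_X}$---or the points $x_\nu:=a_\nu+r_{v_\nu}(a_\nu)v_\nu\in\overline{M_X}$ accumulate at $a_0$; in both cases $a_0\in\overline{M_X}$, so $r'(a_0)=0=L$. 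If $L>0$, set $x_0:=a_0+Lv_0$, the limit of the sequence $x_\nu\in\overline{M_X}$. The closedness of $\overline{M_X}$ gives $x_0\in\overline{M_X}$, and closedness of the graph of $m$ forces $a_0\in m(x_0)$. In the subcase $x_0\in M_X$, choosing $b_0\in m(x_0)\setminus\{a_0\}$ and directly expanding $\|a_0+tv_0-b_0\|^2$ yields this quantity strictly less than $t^2$ for $t>L$, hence $r_{v_0}(a_0)\leq L$ and $r'(a_0)\leq L$. In the subcase $x_0\in\overline{M_X}\setminus M_X$, one has $m(x_0)=\{a_0\}$ and Theorem~\ref{Miurat} gives $L=d(x_0)\geq\tilde{r}_{v_0}(a_0)$.

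The main obstacle is to upgrade this last inequality to $L\geq r_{v_0}(a_0)$, equivalently to identify $\tilde{r}_v(a)=r_v(a)$ at $Reg_2 X$ points. I would resolve this by invoking Proposition~\ref{radius_for_epsilon} with $\varepsilon\in(0,\tilde{r}_{v_0}(a_0))$: both reaching radii transport to the level set $d^{-1}(\varepsilon)$ with the same additive shift, and near $a_0+\varepsilon v_0$ this level set is a $\mathcal{C}^2$ hypersurface thanks to the $\mathcal{C}^2$-regularity of $X$ at $a_0$ and $\varepsilon$ being a regular value of $d$ there. On such a smooth hypersurface the unit normal varies continuously, so the joint upper semicontinuity established above combines with the lower semicontinuity built into the liminf defining $\tilde{r}$ to force equality. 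This produces $L\geq r_{v_0}(a_0)\geq r'(a_0)$ and closes the proof; a minor but necessary verification along the way is that $\tilde{r}_{v_0}(a_0)>0$, which follows from a uniform lower bound on reach in a neighborhood of a $Reg_2$ point with $r'(a_0)>0$.
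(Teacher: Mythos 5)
Your upper semicontinuity argument matches the paper's in substance: both rest on the closedness/convexity of the normal set $\mathcal{N}(a)$ giving joint upper semicontinuity of $(a,v)\mapsto r_v(a)$, plus continuity of the normal bundle on $Reg_2X$ to lift a near-optimal direction. That part is sound. The split into cases $L=0$ versus $L>0$ and $x_0\in M_X$ versus $x_0\in\overline{M_X}\setminus M_X$ is also a perfectly reasonable way to organize the lower bound, and the explicit expansion of $\|a_0+tv_0-b_0\|^2$ in the $x_0\in M_X$ subcase is correct.

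The gap is in the final and hardest step, the case $x_0\in\overline{M_X}\setminus M_X$, where you need to upgrade $\tilde r_{v_0}(a_0)\le L$ to $r_{v_0}(a_0)\le L$, i.e.\ you need $\tilde r_v(a)\ge r_v(a)$ on $Reg_2X$. That inequality is precisely lower semicontinuity of $(x,v_x)\mapsto r_{v_x}(x)$ at $(a_0,v_0)$, which is the crux of the whole proposition, and the mechanism you offer for it does not produce it. Transporting via Proposition~\ref{radius_for_epsilon} to the level set $\Gamma=d^{-1}(\varepsilon)$ changes nothing essential: even on a $\mathcal{C}^2$ hypersurface with continuously varying unit normal, the directional reaching radius need not be lower semicontinuous, because it is sensitive to distant self-approaches of the set, not only to local curvature. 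Continuity of normals controls the focal radius, not the reach; a bone-shaped curve whose two arms nearly touch is the standard counterexample to the inference you are drawing. So the sentence ``joint upper semicontinuity combines with the lower semicontinuity built into the liminf defining $\tilde r$ to force equality'' is not an argument: upper semicontinuity gives $\tilde r_v(a)\le r_v(a)$, and the liminf in the definition of $\tilde r$ only ever makes $\tilde r$ smaller, so these two facts push in the same direction and cannot ``force equality.'' Moreover, your closing remark that $\tilde r_{v_0}(a_0)>0$ ``follows from a uniform lower bound on reach in a neighborhood of a $Reg_2$ point'' is close to circular, since such a uniform bound is exactly the kind of lower semicontinuity statement the proposition is meant to establish.

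The paper supplies the missing idea by a genuinely different device. It fixes a $\mathcal{C}^2$ local parametrization $g:(G,0)\to(V\cap X,a)$ and considers
$$F(x,t)=\Bigl(\bigl\langle x-g(t),\tfrac{\partial g}{\partial t_i}(t)\bigr\rangle\Bigr)_{i=1}^{d},$$
whose zero set describes the normal incidence $x-g(t)\in N_{g(t)}X$. Because $\det\frac{\partial F}{\partial t}(a+rv,0)$ is a polynomial in $r$ that is nonzero at $r=0$, it vanishes only finitely often, so for $r$ arbitrarily close to $r'(a)$ the implicit function theorem furnishes a single-valued $\mathcal{C}^1$ selection $g(\tau(x))\in m(x)$ near $a+rv$, showing $a+rv\notin\overline{M_X}$. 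Only then does Theorem~\ref{Miurat} enter, giving $\tilde r_v(a)>r$ for every such $r$, hence $\tilde r_v(a)\ge r'(a)$ for all $v\in V_a$; combined with the usual diagonal extraction this yields $\liminf_{x\to a}r'(x)\ge r'(a)$. Your plan would be rescued if you inserted this separation step, namely a proof that $a_0+rv_0$ is isolated from $\overline{M_X}$ for generic $r<r'(a_0)$, in place of the level-set reduction; as written, that key lemma is missing.
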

\begin{proof}
We will prove $$\limsup_{X\backslash \lbrace a\rbrace \ni x\to a} r'(x)\leq r'(a)\leq \liminf_{X\backslash \lbrace a\rbrace \ni x\to a} r'(x).$$

A function $\rho(v)=r_v(a)$ is continuous on a compact set $V_a$, therefore there exists $v\in V_a$ with $$r_v(a)=r'(a).$$ 
Now for any sequence $X\ni x_\nu\to a$ with $r'(x_\nu)$ convergent to a certain $r$ and $v_\nu \in V_{x_\nu}$ convergent to $v$, we have
$$m(x_\nu+v_\nu r'(x_\nu))\ni x_\nu$$
and 
$$ x_\nu+v_\nu r'(x_\nu)\to a+vr.$$
From the upper semi-continuity of $m$ we derive 
$$\lbrace a\rbrace=\lim_{\nu\to\infty}\lbrace x_\nu\rbrace\subset\limsup_{\nu\to\infty} m(x_\nu+v_\nu r'(x_\nu))\subset m(a+vr).$$
Thus, $a\in m(a+vr)$, and consequently  $r\leq r_v(a)$, which proves $$\limsup_{X\backslash \lbrace a\rbrace \ni x\to a}  r'(x)\leq r'(a).$$

For the second inequality, write $d=\dim_aX$ and take $g$ a local parametrisation of $X$ at $a$. That is, an immersion $g:(G,0)\to (V\cap X,a)$ of $\mathcal{C}^2$ class with $G,V$ open subsets of $\mathbb{R}^d,\mathbb{R}^n$ respectively.

Since $V$ is open and $m(x)$ is upper semi-continuous, for any $r''<r'(a)$ and $v\in V_a$ there exists $U$ an open neighbourhood of $a+r''v$ such that $m(U)\subset V$. By summation, we can assume that $(a+rv)\in U$ for all $r\in[0,r'(a))$. 

Consider now 
$$ F:U\times G\ni (x,t)\to (<x-g(t),\frac{\partial g}{\partial t_i}(t)>)^d_{i=1}\in\mathbb{R}^d.$$
The function $F$ is $\mathcal{C}^1$ smooth and since $\lbrace \frac{\partial g}{\partial t_i}(t)\rbrace$ forms a base of $T_{g(t)}X$, there is $$F(x,t)=0\iff x-g(t)\in N_{g(t)}X.$$
In particular that brings $F(a+rv,0)=0$. Our goal now is to use the implicit function theorem to prove that $a+rv$ is separated from the medial axis.
Even though the determinant $\det \frac{\partial F}{\partial t}(a+rv,0)$ is not easily calculable, it is still a polynomial with respect to $r$ and  
$$\det \frac{\partial F}{\partial t}(a,0)=(-1)^d\sum( \det\frac{\partial(g_{i_1},\ldots,g_{i_d})}{\partial t}(0))^2\neq 0$$
hence it has only a finite number of zeros.
Because of that, we can find $r$ arbitrary close to $r'(a)$ with $\det \frac{\partial F}{\partial t}(a+rv,0)\neq 0$. Thus, from the implicit function theorem, there must exist $W\times T\subset U\times G$ a neighbourhood of $(a+rv,0)$, and a $\mathcal{C}^1$ smooth function $\tau :W\ni x\to\tau(x)\in T$ such that 
$$(F(x,t)=0\text{ and }(x,t)\in W\times T)\iff t=\tau(x).$$
That means $g(\tau(x))$ is a continuous selection from $m(x)$ on $W$, thus $m(x)$ is univalent on $W$ and as a consequence $a+rv$ is separated from $M_X$. From theorem~\ref{Miurat} we obtain $\Tilde{r}_v(a)>d(a+rv)=r$ thus
$$\forall v\in \Tilde{V}_a=V_a:\; \Tilde{r}_v(a)\geq r'(a).$$

Take now a sequence $X\ni x_\nu\to a$ with $\lim_{\nu\to\infty} r'(x_\nu)=\Dot{r}(a)$, and a sequence of normal directions $v_\nu\in V_{x_\nu}$ satisfying $r_{v_\nu}(x_\nu)=r'(x_\nu)$. Assuming without loss of generality that $\lbrace v_\nu\rbrace$ is convergent to some vector $v\in V_a$ we have then
$$\liminf_{X\backslash\lbrace a\rbrace \ni x\to a} r'(x)=\lim_{\nu\to\infty} r'(x_\nu)=\lim_{\nu\to\infty} r_{v_\nu}(x_\nu)\geq \tilde{r}_v(a)\geq r'(a).$$
\end{proof}

\begin{rem}

Proposition~\ref{ciaglosc_promienia} not only provides insights about Birbrair-Denkowski reaching radius. What is more, it allows to simplify the very definition of the Birbrair-Denkowski reaching radius by simply taking $$\dot{r}(a)=\liminf_{X\ni x\to a} r'(x).$$ 
\end{rem}

\begin{thm}\label{Denkowski=mine}
For any $a\in X$ the Birbrair-Denkowski reaching radius at $a$ is equal to $r(a)$.
\end{thm}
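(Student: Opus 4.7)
The plan is to exploit the simplified form of the Birbrair--Denkowski reaching radius established in the remark preceding the theorem, namely $\dot{r}(a)=\liminf_{X\ni x\to a}r'(x)$, and then prove the two inequalities $r(a)\leq\dot{r}(a)$ and $\dot{r}(a)\leq r(a)$ separately. Both reduce essentially to a careful interchange of an infimum over normal directions and a liminf over points in $X$, using only the definitions of $\widetilde{V}_a$ and $\tilde{r}_v(a)$ together with compactness of the unit sphere. The edge case $\widetilde{V}_a=\emptyset$ corresponds to $a$ being an interior point of $X$; then $x$ close to $a$ is also interior, $V_x=\emptyset$, and both sides equal $+\infty$, so from here on I would assume $\widetilde{V}_a\neq\emptyset$.

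For $r(a)\leq \dot{r}(a)$, I would start from a sequence $X\ni x_\nu\to a$ realising the liminf, so that $r'(x_\nu)\to \dot{r}(a)$, and assume $\dot{r}(a)<+\infty$ (otherwise the bound is trivial). As then $V_{x_\nu}\neq\emptyset$ eventually, I would pick $v_\nu\in V_{x_\nu}$ with $r_{v_\nu}(x_\nu)\leq r'(x_\nu)+\frac{1}{\nu}$ -- no attainment of the infimum defining $r'(x_\nu)$ is needed. Passing to a subsequence, compactness of the unit sphere yields $v_\nu\to v$ for some $v$, which by the very definition of $\widetilde{V}_a$ belongs to it. The standard inequality $\tilde{r}_v(a)\leq\liminf_\nu r_{v_\nu}(x_\nu)$ -- immediate from $\tilde{r}_v(a)$ being defined as a liminf along such joint sequences -- then gives $\tilde{r}_v(a)\leq \dot{r}(a)$, and taking the infimum over $\widetilde{V}_a$ completes this direction.

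For the reverse inequality, I would fix any $v\in \widetilde{V}_a$ and use the very definition of $\tilde{r}_v(a)$ to extract sequences $X\ni x_\nu\to a$ and $v_\nu\in V_{x_\nu}$ with $v_\nu\to v$ and $r_{v_\nu}(x_\nu)\to \tilde{r}_v(a)$. The trivial bound $r'(x_\nu)\leq r_{v_\nu}(x_\nu)$ and passing to $\liminf_\nu$ then give $\dot{r}(a)\leq \tilde{r}_v(a)$; taking the infimum over $v\in \widetilde{V}_a$ concludes. The whole argument is essentially careful book-keeping with the definitions, so I do not anticipate a deep obstacle -- the only mild subtlety is the possibility that some $V_{x_\nu}$ is empty, which can only occur when $\dot{r}(a)=+\infty$ and the inequality is vacuous.
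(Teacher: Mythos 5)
Your argument is correct and matches the paper's proof in all essentials: both directions are obtained by extracting sequences $x_\nu\to a$, $V_{x_\nu}\ni v_\nu\to v\in\widetilde{V}_a$ and comparing $r'(x_\nu)$ with $r_{v_\nu}(x_\nu)$, using compactness of the sphere for the first direction and the simplified formula $\dot{r}(a)=\liminf_{X\ni x\to a}r'(x)$ throughout. The only cosmetic difference is that the paper proves $\dot{r}(a)\leq r(a)$ by contradiction whereas you argue it directly from the definition of $\tilde{r}_v(a)$; the underlying estimate $r'(x_\nu)\leq r_{v_\nu}(x_\nu)$ is the same.
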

\begin{proof}

Take any sequence $\lbrace x_\nu\rbrace \subset X$ convergent to $a$ with weak reaching radii convergent to $\Dot{r}(a)$. For every $\nu\in\mathbb{N}$ take a sequence of $v^\nu_\mu\in V_{x_\nu}$ approximating  (sufficiently quickly) the weak reaching radius, say $|r_{v_\mu^\nu}(x_\nu)-r'(x_\nu)|<2^{-\mu}$. Picking from the $v_\nu^\nu$ a subsequence convergent to certain $v\in \Tilde{V}_a\subset\mathbb{S}$ we obtain 
$$\Tilde{r}_v(a)=\liminf\limits_{\substack{
     X\ni x\to a\\
     V_x\ni v_x\to v\in\Tilde{V}_a}} r_{v_x}(x)\leq\lim_{\nu\to\infty} r_{v_\nu^\nu}(x_\nu)=\Dot{r}(a).$$
Thus, by taking an infimum over $v\in\widetilde{V}_a$, we receive
$$r(a)\leq \Dot{r}(a).$$

Assume now that $r(a)< \Dot{r}(a)$. It is possible then to choose $\varepsilon>0$ such that 
$$r(a)+\varepsilon<\liminf_{x\to a} r'(x).$$
Thus we can find $U$ - such a neighbourhood of $a$ that for any $x\in U\cap X$
$$r(a)+\varepsilon/2<r'(x)\leq r_{v_x}(x),\;\forall v_x\in V_x.$$
Then by taking a sequence $\lbrace v_\nu\rbrace \subset\widetilde{V}_a$ realising an infimum in the definition of $r(a)$, we obtain for $x$ close enough to $a$

$$\Tilde{r}_{v_\nu}(a)=\liminf\limits_{\substack{
     X\ni x\to a\\
     V_x\ni v_x\to v_\nu\in\Tilde{V}_a}} r_{v_x}(x)> r(a)+\varepsilon/2.$$
By passing with $\nu$ to infinity, we obtain a contradiction

$$r(a)\geq r(a)+\varepsilon/2.$$
\end{proof}

\begin{rem}
Theorem~\ref{Denkowski=mine} and Proposition~\ref{ciaglosc_promienia} give together a continuity of the reaching radius on $Reg_2X$.
\end{rem}

\bibliographystyle{plain}
\bibliography{bibliography}
\end{document}